\tikzstyle{morphism}=[fill=white, rounded corners=3pt, draw=black, shape=rectangle]
\tikzstyle{generic morphism}=[fill=white, draw=black, shape=rectangle, dashed]
\tikzstyle{small box}=[fill=white, draw=black,rounded corners=3pt, shape=rectangle, minimum width=0.5cm, minimum height=0.5cm]
\tikzstyle{medium box}=[fill=white, draw=black,rounded corners=3pt, shape=rectangle, minimum width=0.5cm, minimum height=0.8cm]
\tikzstyle{large morphism}=[fill=white, draw=black,rounded corners=3pt, shape=rectangle, minimum width=0.5cm, minimum height=1.2cm]
\tikzstyle{bn}=[fill=black, draw=black, shape=circle, inner sep=1.5pt]
\tikzstyle{bw}=[fill=white, draw=black, shape=circle, inner sep=1.5pt]
\tikzstyle{bin}=[fill=white, draw=black, shape=circle, inner sep=0pt]
\tikzstyle{effect2}=[fill=white, draw=black, regular polygon, regular polygon sides=3, minimum width=0.8cm, inner sep=0pt]
\tikzstyle{state}=[fill=white, draw=black, regular polygon, regular polygon sides=3, minimum width=0.8cm, shape border rotate=90, inner sep=0pt, rounded corners=3pt]
\tikzstyle{and}=[fill=white, draw=black, circuit logic US, and gate, minimum width=0.5cm, minimum height=0.8cm]
\tikzstyle{or}=[fill=white, draw=black, circuit logic US, or gate,minimum width=1cm, minimum height=1cm]
\tikzstyle{not}=[fill=white,circuit logic US, not gate, minimum width=1cm, minimum height=1cm]
\tikzstyle{xor}=[fill=white, draw=black, circuit logic US, xor gate, minimum width=0.5cm, minimum height=0.8cm]
\tikzstyle{if}=[trapezium, draw=black, fill=white, minimum width=6pt, minimum height=8pt, rotate=270]
\tikzstyle{coreg}=[draw, fill=white, rounded rectangle, rounded rectangle right arc=none, minimum height=1.2em, minimum width=1.4em, node font={\scriptsize}]
\tikzstyle{reg}=[draw, fill=white, rounded rectangle, rounded rectangle left arc=none, minimum height=.47cm, minimum width=.47cm, node font={\scriptsize}]
\tikzstyle{medium state}=[fill=white, draw=black, regular polygon, regular polygon sides=3, minimum width=1.3cm, inner sep=0pt, shape border rotate=90]
\tikzstyle{large state}=[fill=white, draw=black, regular polygon, regular polygon sides=3, minimum width=2.2cm, shape border rotate=180, inner sep=0pt]
\tikzstyle{wide state}=[fill=white, draw=black, shape=isosceles triangle, minimum width=0.8cm, shape border rotate=270, inner sep=1.4pt, minimum height=0.5cm, isosceles triangle apex angle=80]
\tikzstyle{wn}=[fill=white, draw=black, shape=circle, inner sep=1.5pt]
\tikzstyle{blue morphism}=[fill=white, draw={rgb,255: red,15; green,0; blue,150}, shape=rectangle, text={rgb,255: red,15; green,0; blue,150}, tikzit category=blue]
\tikzstyle{red morphism}=[fill=white, draw={rgb,255: red,150; green,0; blue,2}, shape=rectangle, text={rgb,255: red,150; green,0; blue,2}, tikzit category=red]
\tikzstyle{blue state}=[fill=white, draw={rgb,255: red,15; green,0; blue,150}, shape=circle, regular polygon, regular polygon sides=3, minimum width=0.8cm, shape border rotate=180, inner sep=0pt, text={rgb,255: red,15; green,0; blue,150}, tikzit category=blue]
\tikzstyle{blue node}=[fill={rgb,255: red,15; green,0; blue,150}, draw={rgb,255: red,15; green,0; blue,150}, shape=circle, tikzit category=blue, inner sep=1.5pt]
\tikzstyle{blue}=[text={rgb,255: red,15; green,0; blue,150}, tikzit draw={rgb,255: red,191; green,191; blue,191}, tikzit category=blue, tikzit fill=white, inner sep=0mm]
\tikzstyle{blue wide state}=[fill=white, draw={rgb,255: red,15; green,0; blue,150}, text={rgb,255: red,15; green,0; blue,150}, shape=isosceles triangle, minimum width=0.8cm, shape border rotate=270, inner sep=1.4pt, minimum height=0.5cm, isosceles triangle apex angle=80]
\tikzstyle{red node}=[fill={rgb,255: red,150; green,0; blue,2}, draw={rgb,255: red,150; green,0; blue,2}, shape=circle, inner sep=1.5pt]
\tikzstyle{Purple node}=[fill={rgb,255: red,120; green,0; blue,120}, draw={rgb,255: red,120; green,0; blue,120}, text={rgb,255: red,120; green,0; blue,120}, shape=circle, inner sep=1.5pt]
\tikzstyle{red}=[text={rgb,255: red,150; green,0; blue,2}, inner sep=0mm, tikzit fill=white, tikzit draw={rgb,255: red,191; green,191; blue,191}]
\tikzstyle{purple}=[text={rgb,255: red,150; green,0; blue,150}, inner sep=0mm, tikzit fill=white, tikzit draw={rgb,255: red,191; green,191; blue,191}]
\tikzstyle{white morphism}=[fill=white, draw=white, shape=rectangle, tikzit draw={rgb,255: red,139; green,139; blue,139}]
\tikzstyle{leak morphism}=[fill=white, draw={rgb,255: red,120; green,0; blue,85}, shape=rectangle, text={rgb,255: red,120; green,0; blue,85}, tikzit category=leak]
\tikzstyle{leak}=[text={rgb,255: red,120; green,0; blue,85}, inner sep=0mm, tikzit fill=white, tikzit draw={rgb,255: red,191; green,191; blue,191}, tikzit category=leak]
\tikzstyle{leak node}=[fill={rgb,255: red,120; green,0; blue,85}, draw={rgb,255: red,120; green,0; blue,85}, shape=circle, inner sep=1.5pt, tikzit category=leak]
\tikzstyle{horiz state}=[fill=white, draw=black, regular polygon, regular polygon sides=3, minimum width=1cm, shape border rotate=90, inner sep=0pt]
\tikzstyle{none_90}=[rotate=90]
\tikzstyle{none_-90}=[rotate=-90]
\tikzstyle{arrow}=[->]
\tikzstyle{dashed box}=[-, dashed]
\tikzstyle{blue arrow}=[-, draw={rgb,255: red,15; green,0; blue,150}, tikzit category=blue]
\tikzstyle{red arrow}=[-, draw={rgb,255: red,150; green,0; blue,2}, tikzit category=red]
\tikzstyle{purple arrow}=[->, draw={rgb,255: red,120; green,0; blue,120}, >=stealth, shorten <=2pt, shorten >=2pt]
\tikzstyle{protected purple arrow}=[->, draw={rgb,255: red,120; green,0; blue,120}, >=stealth, shorten <=2pt, shorten >=2pt, preaction={line width=1.8pt, white, draw}]
\tikzstyle{mapsto}=[{|->}]
\tikzstyle{double wire}=[-, draw, line width=0.8pt, white, preaction={-, draw, line width=1.8pt}]
\tikzstyle{curly brace}=[-, draw=none, tikzit draw={rgb,255: red,128; green,0; blue,128}]
\tikzstyle{protected}=[-, preaction={line width=1.8pt,white,draw}]
\tikzstyle{leak arrow}=[-, tikzit draw={rgb,255: red,150; green,0; blue,120}]
\tikzstyle{protected leak arrow}=[-, tikzit draw={rgb,255: red,150; green,0; blue,120}]
\tikzstyle{hollow arrow}=[-, very thin, white, preaction={line width=0.7pt,draw={rgb,255: red,120; green,0; blue,85}}, tikzit category=leak, tikzit draw={rgb,255: red,150; green,0; blue,120}]
\tikzstyle{protected hollow arrow}=[-, very thin, white, preaction={line width=0.7pt,draw={rgb,255: red,120; green,0; blue,85},preaction={line width=2.1pt,white,draw}}, tikzit category=leak, tikzit draw={rgb,255: red,150; green,0; blue,120}]
\tikzstyle{over arrow}=[-, black, preaction={draw=white, double}]
\tikzstyle{curly brace}=[-, decorate, decoration={brace,amplitude=5pt}]
\tikzstyle{inv curly brace}=[-, decorate, decoration={brace,amplitude=5pt,mirror}]
\tikzstyle{d-wire1 plate}=[-, double=red!20!white]
\tikzstyle{d-wire2 plate}=[-, double=red!32!white]
\tikzstyle{dotted_plate}=[-,rounded corners=3pt, densely dotted, draw=blue, fill opacity=0.4, fill=blue!50!white]
\tikzstyle{twire1}=[-, draw, line width=0.4pt, preaction={-, draw, line width=1.4pt, red!20!white, preaction={-, draw, line width=2.2pt}}]
\tikzstyle{twire2}=[-, draw, line width=0.4pt, preaction={-, draw, line width=1.4pt, red!32!white, preaction={-, draw, line width=2.2pt}}]
\crefname{app}{Appendix}{Appendices}
\Crefname{thm}{}{} 
\newtheorem{notation}[thm]{Notation}
\newcommand{\newterm}[1]{\emph{\textbf{#1}}}
\renewcommand{\emptyset}{\varnothing} 
\newcommand{\N}{\mathbb{N}}
\newcommand{\op}{\mathrm{op}}
\newcommand{\cat}[1]{{\mathsf{#1}}}
\newcommand{\Setcat}{\mathsf{Set}}
\newcommand{\id}{\mathrm{id}} 		
\newcommand{\tensor}{\otimes}
\newcommand{\comp}{ 		
	\mathchoice{\,}{\,}{}{} 	
}
\DeclareMathOperator{\cop}{copy}
\DeclareMathOperator{\del}{del}
\newcommand{\cC}{\mathsf{C}}		
\newcommand{\cD}{\mathsf{D}}		
\newcommand{\inften}[1]{{#1}^{\otimes\infty}}
\newcommand{\inftenexp}[1]{{#1}_{\operatorname{expl}}^{\otimes\infty}}
\newcommand{\J}{J}
\newcommand{\pfin}[1][\J]{\mathsf{FinLO}(#1)}
\newcommand{\psh}[1]{[#1^{\op},\Setcat]}
\newcommand{\oppsh}[1]{[#1,\Setcat]^{\op}}
\DeclareMathOperator*{\colim}{\operatorname{colim}}
\newcommand{\free}[1]{\mathsf{Free}(#1)}
\newcommand{\inffree}[1]{\mathsf{Free}^{\infty}(#1)}
\newcommand{\oarrow}{\begin{tikzcd}[cramped,sep=small,ampersand replacement=\&]
{}\ar[r,oarrow]\&{}
\end{tikzcd}}
\tikzset{
  oarrow/.style={
    decoration={
      markings,
      mark=at position 0.5 with {%
        \node[draw,circle,inner sep=1pt,fill=white]{};%
      }
    },
    postaction={decorate},
    ->
  }
}
\newcommand{\finstoch}{\mathsf{FinStoch}}
\newcommand{\binstoch}{\mathsf{BinStoch}}
\newcommand{\gauss}{\mathsf{Gauss}}
\newcommand{\borelstoch}{\mathsf{BorelStoch}}
\newcommand{\canstoch}{\mathsf{CantorStoch}_{\mathsf{lc}}}
\newcommand{\stonestoch}{\mathsf{StoneStoch}_{\mathsf{lc}}}
\newcommand{\stoch}{\mathsf{Stoch}}
\newcommand{\nemp}{\operatorname{ne}}
\newcommand{\as}[1]{
	\def\relstate{#1}%
	\ifx\relstate\empty
		\text{a.s.}%
	\else
		{#1\text{-a.s.}}%
	\fi
}
\newcommand{\clopen}[1]{\mathsf{Clopen}(#1)}
\providecommand{\given}{\,|\,}			
\begin{document}
\begin{frontmatter}
  \title{Approaching the Continuous from the Discrete: \\ an Infinite Tensor Product Construction\thanksref{ALL}} 						
 \thanks[ALL]{This work was partially supported by the ARIA Safeguarded AI TA1.1 programme. 
 We thank anonymous referees for helpful comments and suggestions.
 This work is licensed under the Creative Commons Attribution 4.0 International License (\ccLogo\hspace{1pt}\ccAttribution\ CC BY 4.0).
 }   
  \author{Antonio Lorenzin\thanksref{a}\thanksref{myemail}}	
   \author{Fabio Zanasi\thanksref{b}\thanksref{coemail}}		
   \address[a]{Independent researcher\\	
    Trento, Italy}  							
   \thanks[myemail]{Email: \href{mailto:a.lorenzin.95@gmail.com} {\texttt{\normalshape
        a.lorenzin.95@gmail.com}}} 
  \address[b]{Computer Science Department\\University College London\\
    London, UK} 
  \thanks[coemail]{Email:  \href{mailto:f.zanasi@ucl.ac.uk} {\texttt{\normalshape
        f.zanasi@ucl.ac.uk}}}
\begin{abstract}
	Increasingly in recent years, probabilistic computation has been investigated through the lenses of categorical algebra, especially via string diagrammatic calculi. Whereas categories of discrete and Gaussian probabilistic processes have been thoroughly studied, with various axiomatisation results, more expressive classes of continuous probability are less understood, because of the intrinsic difficulty of describing infinite behaviour by algebraic means.

	In this work, we establish a universal construction that adjoins infinite tensor products, allowing continuous probability to be investigated from discrete settings. Our main result applies this construction to $\finstoch$, the category of finite sets and stochastic matrices, obtaining a category of locally constant Markov kernels, where the objects are finite sets plus the Cantor space $2^\mathbb{N}$. Any probability measure on the reals can be reasoned about in this category. Furthermore, we show how to lift axiomatisation results through the infinite tensor product construction. This way we obtain an axiomatic presentation of continuous probability over countable powers of $2=\lbrace 0,1\rbrace$.
\end{abstract}
\begin{keyword}
	Categorical Probability, String Diagrams, Infinite Tensor Products, Cantor Space
\end{keyword}
\end{frontmatter}
\section{Introduction}
Category-theoretic approaches to probabilistic computation have attracted growing interest in recent years, with applications ranging from evidential decision theory (\cite{dilavore2023evidential}) to random graphs (\cite{ackerman2024randomgraphs}) and active inference (\cite{tull2024activeinference}).
Their ability to highlight the underlying algebraic structures provides a rigorous semantics that enhances formal clarity, compositional methodologies, and admits an intuitive description in terms of \emph{string diagrams}~(\cite{piedeleuzanasi,selinger11graphical}). In particular, fundamental notions such as determinism, Bayesian inversion, and Bayesian updates can be studied algebraically in the diagrammatic language~(\cite{chojacobs2019strings,fritz2019synthetic,Jacobs_Kissinger_Zanasi_2021}).

Following this emerging line of research, recent developments have focused on providing \emph{complete axiomatisations} of categorical models of probability, via string diagrammatic theories. Completeness is key to guarantee that any semantic equality, and thus algebraic reasoning about the aforementioned notions, can in principle be derived in the diagrammatic calculus. In this context, axiomatising a (symmetric monoidal) category $\cat{C}$, which expresses our semantic domain, means to identify a set of generating string diagrams $\Sigma$ and equations $E$ such that the category $\free{\Sigma,E}$ freely obtained by the theory $(\Sigma,E)$ is isomorphic (or just equivalent) to $\cat{C}$. In other words, $\cat{C}$-morphisms may be regarded as $\Sigma$-diagrams quotiented by $E$. A first such result is the axiomatisation of the category of finite sets and stochastic matrices (with monoidal product given by disjoint sum) via the theory of convex algebras, presented in~\cite{fritz2009presentationcategorystochasticmatrices} following~\cite{Stone1949PostulatesFT}. Another such result is contained in \cite{stein2024graphical}, which studies a category of Gaussian probability using string diagrams expressing affine, relational, and quadratic behaviour. Most relevant to the present scope is the complete axiomatisation of a category of discrete probabilistic processes given in \cite{piedeleu2025completeaxiomatisation} (see also \cite[Section 4]{digiorgio2025parametric_iteration}). Specifically, the category under investigation, denoted by $\binstoch$, is the category of stochastic matrices between powers of $2=\lbrace 0,1\rbrace$, and the associated syntax is generated by probability distributions and Boolean operations.
Two related efforts generalise this description beyond $\binstoch$: \cite{sarkis2025gradedimprecise} extends the framework by introducing a graded structure to  encompass imprecise probabilities, while \cite{bonchi2025tapediagrams} notes that the use of an additional monoidal product enables an axiomatisation of $\finstoch$, the category of stochastic matrices between arbitrary finite sets.

To date, a fundamental gap in this research area is the absence of a string diagrammatic axiomatisation for categories of \emph{continuous} probability beyond the Gaussian case. A chief example is $\borelstoch$, whose objects are standard Borel spaces and whose morphisms are Markov kernels (generalisations of stochastic matrices to the continuous setting).
It is generally unclear how to encode infinite behaviour directly via a choice of generators and equations within the existing framework of string diagrammatic algebra. 

In this work we address such challenge through the use of \emph{infinite tensor products}, which are special limits of tensor (monoidal) products.\footnote{Throughout, we use ``tensor'' and ``monoidal'' synonymously, in order to both follow the terminology of \cite{fritzrischel2019zeroone} and adhere to the terminology of categorical probability.}
This approach is inspired by \cite{fritzrischel2019zeroone}, where it is shown that $\mathbb{R}$ is the infinite tensor product of finite sets in $\borelstoch$.
It also bears significant similarities to \cite{moss2026causalMarkov}, which explores the role of pro-completions in the context of Markov categories.\footnote{Note our work was developed independently from \cite{moss2026causalMarkov}. We give a detailed comparison of an approach based on pro-completions, as the one of \cite{moss2026causalMarkov}, and one based on finite approximation families, introduced in the present work, in Section~\ref{sec:finite_approximations}.}

To understand how infinite tensor products capture probabilistic meaning, we note that all probability measures on $\mathbb{R}$ can be seen as limits of probability measures on finite sets, by virtue of the celebrated Kolmogorov extension theorem. 
Our main contributions are the following:
\begin{enumerate}
	\item Introduce a universal construction that adjoins infinite tensor products (Theorem~\ref{thm:general_adjunction_ITP});
	\item Provide a concrete description of morphisms between infinite tensor products, in terms of \emph{finite approximation families} (\cref{sec:finite_approximations});
	\item Employ this framework to describe freely generated categories with an infinite tensor product, which admit a diagrammatic representation through plate notation (\cref{sec:stringdiagrams});
	\item Provide a characterisation of the category $\inften{\finstoch}$ obtained by adjoining infinite tensor products to $\finstoch$ (Theorem~\ref{thm:inften_of_finstoch}) in terms of Stone spaces and locally constant Markov kernels.
	\item Provide an axiomatic presentation of $\canstoch$, the restriction of $\inften{\finstoch}$ to powers of $2$ together with the Cantor space $2^{\mathbb{N}}$, its (countably) infinite power (Corollary~\ref{cor:binstoch_canstoch}).
\end{enumerate}
Regarding the fourth contribution, one might hope that the whole of $\borelstoch$ could be recovered as $\inften{\finstoch}$, i.e.\ by adjoining infinite tensor products to its discrete counterpart $\finstoch$.
However, this is overly ambitious: not because of probability measures, which can always be described as above, but because of the freeness of measurable functions, whose behaviour cannot be fully captured by finite operations. What we obtain instead is a subcategory of $\borelstoch$, which is nonetheless quite expressive: probability measures on $\mathbb{R}$ are in bijective correspondence with probability measures on $2^{\mathbb{N}}$, thus showing that $\canstoch$ is rich enough to encompass all probability measures on $\mathbb{R}$.
Additionally, this category includes nontrivial kernels that capture biased behaviour with respect to their inputs (see Example~\ref{ex:lc_Mker} for a detailed discussion).



\paragraph*{Outline} 
First we recall semicartesian categories, which provide the necessary structure to define infinite tensor products (\cref{sec:semicartesian}).
In \cref{sec:universal_inften} we give a universal construction of the category obtained by adjoining infinite tensor products to a semicartesian category.
\cref{sec:finite_approximations} restricts the discussion to semicartesian categories with cancellative deletions, where morphisms between infinite tensor products are determined by finite approximations.
String diagrams and monoidal theories are discussed in \cref{sec:stringdiagrams}, while \cref{sec:markov} focuses on Markov categories.
\cref{sec:stone} presents the main result, giving an explicit description of the universal construction for $\finstoch$ and $\binstoch$.
The developed syntax is applied in \cref{sec:plate_chains} to a short use case on Markov chains.
Concluding remarks and future directions are briefly discussed in \cref{sec:conclusions}.
Proofs of auxiliary results are deferred to the appendices.

\section{Semicartesian Categories and Infinite Tensor Products}\label{sec:semicartesian}
For the sake of generality, we will first consider semicartesian categories (sometimes known as affine symmetric monoidal categories), as their theory is rich enough to allow the study of infinite tensor products (\cite{fritzrischel2019zeroone}).
From \cref{sec:markov} onward, we will restrict focus to Markov categories, broadly adopted for modelling probabilistic and statistical behaviour --- see, e.g., \cite{cornish2024stochastic,fritz2019synthetic,jacobs2021logical,jacobs2019causal_surgery}.
In fact, the examples introduced below are all Markov categories.

\begin{definition}
	A \newterm{semicartesian category} is a symmetric monoidal category $(\cC,\tensor,I)$ in which the monoidal unit $I$ is terminal.
	The unique morphism of type $X \to I$ is denoted by $\del_X$ (in string diagrams, \hspace{-.5ex}\minitikzfig{del}\hspace{-.5ex}) and called \emph{delete morphism}.
\end{definition}

\begin{example}
	The main example for our purposes is the semicartesian category $\finstoch$~\cite[Example 2.5]{fritz2019synthetic}, whose objects are finite sets and whose morphisms $f\colon X \to Y$ are functions $Y \times X \to \mathbb{R}^{\ge 0}$ satisfying $\Sigma_{y \in Y} f(y,x) = 1$, also known as stochastic functions.
	To better highlight the connection with probability theory, one generally employs the ``conditional notation'' and write $f(y\given x)$ in place of $f(y,x)$.
	In particular, $1= \Sigma_{y \in Y} f(y,x) =\Sigma_{y \in Y} f(y\given x)$ means that $f(-\given x)$ is a probability distribution on $Y$ for each $x \in X$.
	In other words, this category is the restriction of the Kleisli category of the distribution monad to finite sets.

	Composition is defined by $g\comp f (z \given x) := \sum_{y \in Y} g(z\given y) f(y \given x)$ for any composable morphisms $f \colon X \to Y$ and $g \colon Y \to Z$,
				while the tensor product is given by $f \otimes h (y,w \given x,z) := f(y\given x) h(w \given z)$ for any $f\colon X \to Y$ and $h \colon Z \to W$.
	In particular, the delete maps are given by $\del({\given x})=1$.
	In this paper, we will also consider $\finstoch_{\nemp}$, the full subcategory given by \emph{nonempty} finite sets. The reason for such a restriction will be explained in \cref{sec:finite_approximations}.
\end{example}
\begin{example}
	Another important example is $\binstoch$, the full subcategory of $\finstoch$ (and $\finstoch_{\nemp}$) given by finite powers of $2=\lbrace 0,1\rbrace$.
	This has become especially relevant after the work \cite{piedeleu2025completeaxiomatisation} provided a complete axiomatisation for such a category.
\end{example}

\begin{example}\label{ex:borelstoch}
	In \cite{fritzrischel2019zeroone}, where infinite tensor products were originally defined, the main example is $\borelstoch$, which generalizes $\finstoch$ outside the finite context.
	$\borelstoch$ is the category whose objects are standard Borel spaces (measurably isomorphic to a finite set, $\mathbb{Z}$, or $\mathbb{R}$ by Kuratowski's theorem) and whose morphisms are Markov kernels $f\colon X \to Y$, given by honest functions $f\colon \Sigma_Y \times X \to [0,1]$, where $\Sigma_Y$ is the $\sigma$-algebra of $Y$, such that $f(U\given -)\colon X \to [0,1]$ is measurable and $f(- \given x)\colon \Sigma_Y\to [0,1]$ is a probability measure.

	The measurability condition allows us to define a composition by integration: $g\comp f (U \given x) := \int_{Y} g(U\given dy) f(dy \given x)$.
	The tensor product is determined by the rule $f\otimes h (U\times V\given x,z) \coloneqq f(U\given x)h(V \given z)$.
	As before, the delete maps are given by $\del_X(I\given x) = 1$ for all $x \in X$ (where $I$ is a singleton).

	We will also use $\borelstoch_{\nemp}$ to denote the full subcategory of $\borelstoch$ without the empty set.
\end{example}
\begin{example}
	A more general version of $\borelstoch$ is $\stoch$, where objects are all measurable spaces. See \cite{fritz2019synthetic} for details.
\end{example}

\label{sec:inften}
Infinite tensor products were introduced in \cite{fritzrischel2019zeroone} to formulate zero--one laws in categorical language.
They have since become central in category-theoretic approaches to continuous probability; for instance, \cite{fritz2025empirical} employs them in discussing laws of large numbers. We now recall the basic setup and introduce some new terminology that will be convenient for later use. Throughout, let $\pfin$ denote the category whose objects are finite linearly ordered subsets of a \emph{countable} set $\J$, with a morphism $F \to F'$ whenever $F$ is an ordered subset of $F'$ (i.e., the inclusion preserves the order).
For the sake of simplicity, morphisms in $\pfin$ will often be denoted with the subset notation: $F\subseteq F'$.

\begin{definition}
	Let $\cC$ be a semicartesian category. A functor $X \colon \pfin^{\op} \to \cC$ is an \newterm{abstract infinite tensor product}\footnote{To be precise, we should say abstract \emph{countably} infinite tensor product. To avoid overloading notation, we will simply omit the adjective, and throughout infinite will always mean \emph{countably} infinite. Nonetheless, the full generality is expected to work, and this restriction is important only to our main application.} if $X$ sends $F$ to $X_F \coloneqq \bigotimes_{j\in F} X_{j}$, where $X_{j}$ is the image of $\lbrace j \rbrace \subseteq J$ via $X$, and the inclusion $F \subseteq F'$ to the morphism
\[
\pi_{F',F}\colon X_{F'}\to X_F
\]
which deletes all $j \in F' \setminus F$, and keeps the rest unchanged. 
Up to permutation, $\pi_{F',F}= \id_{X_F} \otimes \del_{X_{F'\setminus F}}$.

In this case, one also says that $X$ is an abstract infinite tensor product of the family $(X_j)_{j\in J}$.
\end{definition}

Intuitively, an abstract infinite tensor product allows one to tensor together infinitely many objects by looking at finite tensor products.

\begin{notation}
Whenever $X$ is an abstract infinite tensor product, its associated countable set $\J$ may be denoted by $\J_X$ for clarity.
\end{notation}

\begin{definition}
	Let $\cC$ be a semicartesian category.
	A \newterm{(concrete) infinite tensor product} $X=\bigotimes_{j\in \J} X_j$ of a family of objects $(X_j)_{j \in \J}$ in $\cC$ is a limit of the associated abstract infinite tensor product, provided this limit is preserved by $-\tensor Y$ for all $Y \in \cC$.

	The morphisms $X\to X_F$ given by the universal property will be called \newterm{finite marginalisations} and denoted by $\pi_F$.
\end{definition}

Note that such products are not strictly unique; rather, they are unique up to isomorphism. This subtle distinction  mirrors the one between strict and strong monoidal functors, and it is the reason we will work with the latter throughout.

\begin{definition}
	A semicartesian category \newterm{has infinite tensor products} if every abstract infinite tensor is realised, i.e.\ it is associated to a concrete infinite tensor product.
\end{definition}
In \cite{fritzrischel2019zeroone}, it was shown that $\borelstoch$ has infinite tensor products, and moreover $\mathbb{R}$ is isomorphic to any nontrivial infinite tensor product (this follows by the mentioned Kuratowski's theorem, since $\mathbb{R}$ is measurably isomorphic to any uncountable standard Borel space).


Throughout, symmetric monoidal functors are always meant to be \newterm{strong}, meaning that  their coherence morphisms are always isomorphisms.

\begin{definition}
A symmetric monoidal functor $\phi\colon \cC \to \cD$ between two semicartesian categories with infinite tensor products is \newterm{ITP-preserving} if $\phi(\bigotimes_{j\in \J} X_j)$ is an infinite tensor product of the family $(\phi(X_j))_{j\in \J}$.\footnote{In particular, the finite marginalisations $\pi_F$ are preserved by $\phi$.
As $\pi_F$ is a notation for any infinite tensor product, we will write $\phi(\pi_F)=\pi_F$ for simplicity.
}
\end{definition}
Before proceeding, we also consider the following technical requirement, necessary to establish the forthcoming results. In \cite{houghtonlarsen2021dilations}, semicartesian categories are called theories and the property below is called normality.
Following a now-established tradition in categorical modelling of probability \cite{chojacobs2019strings,fritz2019synthetic,piedeleuzanasi}, we will often depict morphisms of these categories using string diagrams.
\begin{definition}\label{def:cancellativedeletions}
	A semicartesian category $\cC$ has \newterm{cancellative deletions} if $f \otimes \del_X = g\otimes \del_X$ implies $f =g$ for all parallel morphisms $f,g$ and all objects $X$.
	In string diagrams,
	\[
	\tikzfig{fxdel}\,\,=\,\, \tikzfig{gxdel} \qquad \implies \qquad \tikzfig{f} \,\,=\,\,  \tikzfig{g}
	\]
	for all $f$ and $g$.
\end{definition}

In particular, surjectiveness of the marginalisations $\pi_{F',F}\colon X_{F'}\to X_F$ and $\pi_F\colon X\to X_F$ above is ensured.

The fact that $\finstoch_{\nemp}$, $\binstoch$, $\borelstoch_{\nemp}$ and $\stoch_{\nemp}$ satisfy this property follows by direct check:
whenever $f\otimes \del_X (U \given a,x) = g\otimes\del_X (U \given a,x)$, then $f(U\given a) = f(U\given a) \del_X(\given x)  = g(U\given a) \del_X(\given x) =g(U\given a)$ concludes the proof since $\del_X(\given x)=1$ for all $x$. Note that to ensure this property, the empty set is not an object of these categories.

\begin{remark}
Depending on taste, one may prefer to require cancellative deletions to hold whenever the domain of the deletion is not the initial object, in order to retain the empty set in the examples. In that case, one should additionally assume that tensoring with the initial object yields the initial object.\footnote{In this way, all infinite tensor products where the initial object occurs are initial as well.}
However, the treatment below would then have to single out the initial object in several places. To avoid this redundancy, we opted for a more direct approach. Nonetheless, the whole treatment could be adapted to this variant.	
\end{remark}



\section{A Universal Construction for Infinite Tensor Products}\label{sec:universal_inften}
We now focus on proving the following statement, which intuitively states that adjoining infinite tensor products satisfies a natural universal property.
This result is propaedeutic for our aims, but it is also of independent interest for other works using infinite tensor products \cite{chen2024aldoushoover,fritz2021definetti,fritzrischel2019zeroone}.

\begin{theorem}\label{thm:general_adjunction_ITP}
	Let $\cC$ be a semicartesian category.
	Then there exist a semicartesian category $\inften{\cC}$ with infinite tensor products and a fully faithful symmetric monoidal functor $\cC \to \inften{\cC}$, such that for every semicartesian category $\cD$ with infinite tensor products and every symmetric monoidal functor $\phi \colon \cC \to \cD$, there exists an ITP-preserving symmetric monoidal functor $\tilde{\phi}\colon \inften{\cC}\to \cD$ such that the following diagram commutes
	\[
	\begin{tikzcd}
	\cC \ar[rr, "\phi"]\ar[rd]&& \cD\\
	&\inften{\cC}\ar[ru,"\tilde{\phi}" below]&
	\end{tikzcd}
	\]
	Moreover, such a $\tilde{\phi}$ is unique up to natural isomorphism.
\end{theorem}
\begin{proof}
	This result follows from the well-known general fact that $\oppsh{\cC}$, the opposite category of functors $\cC \to \Setcat$, is the free completion of $\cC$.\footnote{One may prefer to consider the pro-completion instead, since infinite tensor products are cofiltered limits.}
	Moreover, such a category can be equipped with the (co)Day convolution: for $X$ and $Y$ in $\oppsh{\cC}$, we consider $X\otimes Y \coloneqq \int_{A,B} \cC(A\otimes B,-) \times X(A)\times Y(B)$, where $\int_{A,B}$ denotes the end with respect to $(A,B)$. In particular, the inclusion $\mathsf{cy}\colon \cC \to \oppsh{\cC}$ given by $A \mapsto \cC(A,-)$ is a fully faithful symmetric monoidal functor.

	We restrict to the full subcategory $\inften{\cC}\subset \oppsh{\cC}$ whose objects are infinite tensor products of objects of $\cC$.
	That these limits are indeed preserved under tensoring (i.e., by $-\otimes Y$) is clear because limits commute with limits, so in particular they commute with $\int_{A,B}$.
	We now consider the commutative diagram
	\[
	\begin{tikzcd}
	\cC \ar[rr, "\phi"]\ar[dr,"\iota"]\ar[dd, "\mathsf{cy}" left]&& \cD\ar[dd,"\mathsf{y}"]\\
	&\inften{\cC}\ar[ru,dashed,"\tilde{\phi}" below]\ar[dl,"\iota'"]&\\
	\oppsh{\cC}\ar[rr,"\hat{\phi}"]&& \psh{\cD} 
	\end{tikzcd}
	\]
	To prove the statement, it suffices to show that $\hat{\phi} \comp \iota'$ factors through $\mathsf{y}$, and moreover that this factorisation yields a symmetric monoidal functor. The first part follows directly from limit-preservation of $\tilde{\phi}$ and $\mathsf{y}$.

	Concerning the second property, we note that for two infinite tensor products $X$ and $Y$, we have $X \otimes Y \cong \lim_{F \in \pfin[\J_X]} \lim_{G \in \pfin[\J_Y]}\mathsf{cy}(X_F \otimes Y_G)$.
	We therefore conclude from the following natural isomorphisms:
	\begin{equation*}
	\begin{split}
	\hat{\phi} \left(\lim_{F} \lim_{G}\mathsf{cy}(X_F \otimes Y_G)\right)&\cong \lim_F \lim_G \hat{\phi}\comp \mathsf{cy} (X_F \otimes Y_G) \\
	&\cong \lim_F \lim_G \mathsf{y}\comp \phi (X_F \otimes Y_G)\\
	& \overset{(*)}{\cong}\lim_F \lim_G \mathsf{y}\comp \phi (X)_F\otimes \phi(Y)_G\\ 
	& \cong \mathsf{y}(\lim_F \lim_G \phi (X)_F\otimes \phi(Y)_G),
	\end{split}	
	\end{equation*}
	where $(*)$ is a consequence of strongness of $\phi$,\footnote{In particular, strongness ensures that the image of an infinite tensor product is indeed an infinite tensor product.} while the others follows from limit-preservation and the commutative diagram above. In particular, we stress that $\lim_F \lim_G \phi (X)_F\otimes \phi(Y)_G$ exists in $\cD$ by assumption, and since infinite tensor products are preserved under tensoring, it is isomorphic to $(\lim_F \phi(X)_F) \otimes( \lim_G \phi(Y)_G)$.

	Finally, $\inften{\cC}$ is a symmetric monoidal category with the wanted property. 
	It remains to show that it is indeed semicartesian. 
	As every object is a cofiltered (or inverse or projective) limit of representable functors, $\inften{\cC}(X, \cC(I,-)) \cong \inften{\cC} (\lim_F \cC(X_F,-), \cC(I,-)) \cong \colim_F \cC(X_F,I) \cong \colim_F \lbrace *\rbrace \cong \lbrace * \rbrace$, where the last isomorphism follows from the fact that the colimit is filtered.
\end{proof}
\begin{remark}
We remark that an analogous result could be stated for the pro-completion of $\cC$ (as studied in \cite{moss2026causalMarkov}) in place of $\inften{\cC}$, by requiring $\cD$ to have all cofiltered limits (and moreover that these are preserved under tensoring).
	The proof is analogous.
\end{remark}

\section{Finite Approximation Families}\label{sec:finite_approximations}
While theoretically pleasing, morphisms of $\inften{\cC}$ are difficult to describe succinctly. Indeed, by the Yoneda lemma, we have the following natural isomorphisms
\begin{equation}\label{eq:lim_colim_proobj}
	\cC^{\otimes \infty}(X,Y) \cong \lim_G \colim_F \cC(X_F,Y_G).
\end{equation}
(see, e.g., \cite{Grothendieck1960descent}).
However, unravelling this further proves challenging: 
An element of $\colim_F \cC(X_F,Y_G)$ is an equivalence class of morphisms such that, for every two representatives $f\colon X_F \to Y_G$ and $f' \colon X_{F'}\to Y_G$, there exists $H\supseteq F,F'$ such that 
\[
\begin{tikzcd}[row sep=tiny]
	&X_F\ar[dr,"f"]& \\
	X_H \ar[ur,"\pi_{H,F}"]\ar[dr,"\pi_{H,F'}" below left] && Y_G \\
	&X_{F'}\ar[ur,"f'" below]&
\end{tikzcd}
\]
commutes. 
In general, one has to content themselves with such a description. 
Indeed, even if $F=F'$, the two representatives $f$ and $f'$ can be distinct. For instance, in $\finstoch$, this can happen whenever $X_i=\emptyset$ for some $i \notin F$ (while $X_F\neq \emptyset$).

This motivates an alternative approach based on finite approximation families, which we now introduce. First, recall the notion of cancellative deletions from Definition~\ref{def:cancellativedeletions}, which ensures that all marginalisations are epimorphisms. 
Under this additional property, $f$ and $f'$ belong to the same equivalence class if and only if the commutative diagram above commutes for $H=F\cup F'$, disallowing distinct parallel representatives.
In more suggestive language, $X_F\to Y_G$ may now be viewed as a \emph{finite approximation} of $X\to Y$, encoding a finite amount of the data from which the latter is defined.
This yields the following description of morphisms.

\begin{definition}\label{def:finite_approximation}
	Let $\cC$ be a semicartesian category with cancellative deletions, and consider $X\colon \pfin[\J_X]^{\op}\to \cC$ and $Y\colon \pfin[\J_Y]^{\op} \to \cC$ two abstract infinite tensor products.
	A \newterm{finite approximation family} $f \colon X \to Y$ is given by an index set $\Lambda_f \subseteq \pfin[\J_X] \times \pfin[\J_Y]$ and a collection $(f_{F,G}\colon X_F\to Y_G)_{(F,G) \in \Lambda_f}$ subject to the following requests:
	\begin{itemize}
		\item \emph{(Naturality)} For every $F' \supseteq F$ and $G'\supseteq G$, if $(F,G),(F',G')\in \Lambda_f$, then the diagram
		\[
		\begin{tikzcd}
			X_{F'}\ar[r,"f_{F',G'}"]\ar[d,"\pi_{F',F}"'] & Y_{G'}\ar[d,"\pi_{G',G}"]\\
			X_F\ar[r,"f_{F,G}"] & Y_G
		\end{tikzcd}
		\]
		commutes;
		\item \emph{(Covering condition)} For every $G \in \pfin[\J_Y]$, there exists $F \in \pfin[\J_X]$ such that $(F,G)\in \Lambda_f$;
		\item \emph{(Hereditariness)} If $(F,G)\in \Lambda_f$, then $(F',G)\in \Lambda_f$ for every $F' \supseteq F$.
	\end{itemize}
	We moreover say that two finite approximation families $f,f'\colon X \to Y$ are \emph{equivalent} if $f_{F,G}=f'_{F,G}$ for all $(F,G)$ in the intersection of the index sets $\Lambda_f \cap \Lambda_{f'}$.
\end{definition}
Hereditariness above (together with cancellativity of deletions) ensures transitivity of the equivalence defined for finite approximation families; in particular, the covering condition also holds for $\Lambda_f \cap \Lambda_{f'}$.

We are finally ready to offer an explicit description of adjoining infinite tensor products.
To obtain a strict monoidal functor, we single out an element $*$, common to all countable sets $\J$ considered.

\begin{definition}\label{def:inften}
	Let $\cC$ be a semicartesian category with cancellative deletion, and let $*$ be a set.
	Then $\inftenexp{\cC}$ is the semicartesian category defined as follows:
	\begin{itemize}
		\item Each object is an abstract infinite tensor product $X \colon \pfin^{\op}\to \cC$, with $* \in J$;
		\item Morphisms $f\colon X \to Y$ are equivalence classes of finite approximation families;
		\item Composition is defined pointwise: $g\comp f$ is the finite approximation family given by
		\[ (g\comp f)_{F,H}\coloneqq g_{G,H}\comp f_{F,G}\]
		whenever there is a finite subset $G$ such that $(F,G)\in \Lambda_f$ and $(G,H)\in \Lambda_g$;
		\item The monoidal product of $X$ and $Y$ is given by
		\[
		\begin{array}{cccc}
		X \otimes Y\colon&\pfin[\J_X\cup \J_Y]^{\op}& \longrightarrow & \cC\\
		&F&\longmapsto &X_{F\cap \J_X}\otimes Y_{F\cap \J_Y},
		\end{array}
		\]
		and consequently, given $f \colon X \to Z$ and $g \colon Y \to W$, $f \otimes g$ is the morphism such that
		\[
		(f\otimes g)_{F,G} \coloneqq f_{F\cap \J_X,G \cap \J_Z} \otimes g_{F\cap \J_Y, G\cap \J_W}.
		\] (The monoidal unit is given by $\pfin[\lbrace * \rbrace]^{\op}\owns \lbrace * \rbrace \mapsto I\in \cC$).
	\end{itemize}
\end{definition}
The composition is thoroughly studied in Appendix~\ref{sec:inften_cat}, where it is shown that $\inftenexp{\cC}$ is a semicartesian category.
In particular, the request of having cancellative deletions ensures that the composition is well-defined. 
By construction, there exists a fully faithful symmetric monoidal functor $\cC \to \inftenexp{\cC}$, given by considering each object $X$ of $\cC$ as a functor $\pfin[\lbrace *\rbrace]\to \cC$ defined by $\lbrace *\rbrace \mapsto X$.
We note that $\inftenexp{\cC}$ is strict if $\cC$ is, since the monoidal product is described pointwise.\footnote{This follows from associativity of union. In particular, this motivates our choice of tensor product over other alternatives such as wedge sums (with respect to $\lbrace *\rbrace$), which are not strictly associative.} We are now ready to show that the approach based on pro-completions (Section~\ref{sec:universal_inften}) and the one based on finite approximation families are equivalent.
\begin{proposition}\label{prop:explicit_inften}
	$\inftenexp{\cC}$ is equivalent, via a strong monoidal functor, to $\inften{\cC}$.
\end{proposition}
\begin{proof}
	On objects, the functor is easily described (it sends an abstract infinite tensor product to its realisation).
	On morphisms, this boils down to understanding the set $\lim_G \colim_F \cC(X_F,Y_G)$.  
	Indeed, we note that $\colim_F \cC(X_F,Y_G)$ can be conveniently rewritten as the set
	\begin{multline*}
	\lbrace (F,(f_{F',G}\colon X_{F'}\to Y_G)_{F'\supseteq F}) \mid f_{F',G}\comp \pi_{F'',F'} = f_{F'',G} \text{ for all }F''\supseteq F' \supseteq F\rbrace\Big/\sim \\
\text{where } (F,(f_{F',G})_{F'\supseteq F}) \sim (H, (h_{H,G})) \iff f_{F \cup H,G} = h_{F\cup H,G}
	\end{multline*}
	thanks to cancellative deletions.
	This description in particular ensures hereditariness. 
	Naturality and covering conditions are also immediately recovered, and therefore we have an obvious bijection of morphisms. 
	Composition is given pointwise (i.e., at the level of $X_F\to Y_G$) for both descriptions of the colimit, hence functoriality follows.
	Finally, since in both categories infinite tensor products are limits of elements in $\cC$, which is a full monoidal subcategory of both, the functor is a strong monoidal one by universal property of limits.
\end{proof}

An important consequence of restricting to semicartesian categories with cancellative deletions is the following.
\begin{proposition}\label{prop:faithful}
	Let $\cC$ and $\cD$ be two semicartesian categories with cancellative deletions, and assume $\cD$ has infinite tensor products.
	If a symmetric monoidal functor $\phi\colon \cC\to \cD$ is faithful, then so is $\tilde{\phi} \colon \inften{\cC}\to \cD$.
\end{proposition}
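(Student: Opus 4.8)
The plan is to unwind the definition of $\tilde{\phi}$ given in the proof of \cref{thm:adjunction_ITP} and reduce faithfulness of $\tilde{\phi}$ to faithfulness of $\phi$ on each finite "piece" $X(F)\to Y(G)$, the bridge between the two being that the relevant finite marginalizations are epic whenever the target category has cancellative deletions. Concretely, I would first recall that for a morphism $f\colon X\to Y$ of $\inften{\cC}$ represented by a compatible family $(f_{F,G})_{(F,G)\in\Lambda_f}$, the morphism $\tilde{\phi}(f)\colon\tilde{\phi}(X)\to\tilde{\phi}(Y)$ is characterised by $\pi_G\comp\tilde{\phi}(f)=\phi(f_{F,G})\comp\pi_F$ for every $(F,G)\in\Lambda_f$ — this is exactly the composite \eqref{eq:phi_F} — and that by naturality and hereditariness of the family this composite does not depend on the chosen $F$. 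Since $\tilde{\phi}(Y)$ is a limit, the marginalizations $(\pi_G)_{G}$ are jointly monic, so $\tilde{\phi}(f)=\tilde{\phi}(f')$ holds precisely when $\pi_G\comp\tilde{\phi}(f)=\pi_G\comp\tilde{\phi}(f')$ for all $G$.

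Now suppose $f,f'\colon X\to Y$ satisfy $\tilde{\phi}(f)=\tilde{\phi}(f')$, and fix an arbitrary pair $(F,G)\in\Lambda_f\cap\Lambda_{f'}$ (such pairs are what need checking for $f,f'$ to be equivalent, and by the remark after \cref{def:compatiblefamily} the covering condition still holds for the intersection, so this set is nontrivial). Applying the characterisation above to both $f$ and $f'$ with the \emph{same} index pair $(F,G)$ gives, in $\cD$,
\[
\phi(f_{F,G})\comp\pi_F \;=\; \pi_G\comp\tilde{\phi}(f) \;=\; \pi_G\comp\tilde{\phi}(f') \;=\; \phi(f'_{F,G})\comp\pi_F,
\]
where $\pi_F\colon\tilde{\phi}(X)\to\phi(X)_F$ is a finite marginalization of the concrete infinite tensor product $\tilde{\phi}(X)$ in $\cD$. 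Because $\cD$ has cancellative deletions, such marginalizations are epic (the surjectivity observed right after the definition of cancellative deletions), so we may cancel $\pi_F$ on the right and obtain $\phi(f_{F,G})=\phi(f'_{F,G})$. Faithfulness of $\phi$ then yields $f_{F,G}=f'_{F,G}$. Since $(F,G)$ ranged over all of $\Lambda_f\cap\Lambda_{f'}$, the compatible families $f$ and $f'$ are equivalent, i.e.\ $f=f'$ in $\inften{\cC}$; hence $\tilde{\phi}$ is faithful.

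I do not anticipate a substantial obstacle here — the argument is essentially bookkeeping — but the two points that require care are: (i) invoking joint monicity of the limit legs $\pi_G$ so that $\tilde{\phi}(f)=\tilde{\phi}(f')$ really is detected componentwise, and (ii) noting that it is cancellative deletions of $\cD$, not of $\cC$, that licenses cancelling $\pi_F$. Faithfulness of $\phi$ is used exactly once, at the very end, on the finite morphisms $f_{F,G}\colon X(F)\to Y(G)$; this is also where strongness of $\phi$ enters implicitly, to identify $\phi(X(F))$ with $\phi(X)_F=\bigotimes_{j\in F}\phi(X_j)$ and thus make sense of the marginalization $\pi_F$ in $\cD$.
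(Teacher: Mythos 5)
Your proposal is correct and follows essentially the same route as the paper: marginalize $\tilde{\phi}(f)=\tilde{\phi}(f')$ along each $\pi_G$, use the defining equation $\pi_G\comp\tilde{\phi}(f)=\phi(f_{F,G})\comp\pi_F$, cancel the epimorphism $\pi_F$ (licensed by cancellative deletions in $\cD$), and conclude by faithfulness of $\phi$ on the components. The only cosmetic difference is that you verify $f_{F,G}=f'_{F,G}$ directly for every pair in $\Lambda_f\cap\Lambda_{f'}$, whereas the paper checks one common $F$ per $G$ and then invokes naturality of compatible families; both yield the required equivalence.
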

\begin{proof}
	Let $f,g\colon X \to Y$ be two parallel morphisms in $\inften{\cC}$ such that $\tilde{\phi}(f)=\tilde{\phi}(g)$. Consider their marginalisations $\pi_G \tilde{\phi}(f) = \pi_G \tilde{\phi}(g) \colon \tilde{\phi}(X)\to \tilde{\phi}(Y)_G$.
	For any $G$, there exists $F\subseteq \J_X$ such that $\pi_G f = f_{F,G}\pi_F$ because $f$ is a finite approximation family, and analogously for $g$. Moreover, $F$ can be chosen for both $f$ and $g$ by covering condition and hereditariness, so also $\pi_G g =  g_{F,G} \pi_F$.
	Using this equality via $\tilde{\phi}$, we obtain
	\begin{multline*}
	\phi(f_{F,G}) \pi_F =\tilde{\phi}(f_{F,G}\pi_F ) = \tilde{\phi} (\pi_G f)= \pi_G \tilde{\phi}(f) =
	\pi_G \tilde{\phi}(g) = \tilde{\phi} (\pi_G g) = \tilde{\phi}(g_{F,G}\pi_F )=\phi(g_{F,G}) \pi_F
	\end{multline*}
	Since $\cD$ has cancellative deletions, $\pi_F$ is an epimorphism (note that $\pi_F = \id_{X_F} \otimes \del_{X_{\J_X \setminus F}}$ up to permutations), and therefore $\phi(f_{F,G}) =\phi(g_{F,G})$.
	By faithfulness of $\phi$, we conclude that $f_{F,G}=g_{F,G}$.
	Arbitrariness of $G$, together with naturality of finite approximation families, implies that $f$ and $g$ are equivalent finite approximation families, i.e.\ $f=g$.
\end{proof}

\section{String Diagrams and Monoidal Theories for Infinite Tensor Products}\label{sec:stringdiagrams}
In this section we study how to derive presentations by generators and equations for $\inftenexp{\cC}$ from those of~$\cC$. 
To this end, we introduce a string diagrammatic representation for morphisms of $\inftenexp{\cC}$,
reminiscent of the `plate' notation commonly used in the study of Bayesian networks, see e.g.\ \cite{barberBRML2012}.
In category-theoretic approaches to probability, this has recently been employed in \cite{chen2024aldoushoover}.
Our use of plate notation is similar in spirit, but it is tailored to ensure a formal representation of finite approximation families. 

We recall that a morphism $f\colon X\to Y$ in $\inftenexp{\cC}$ is given by a finite approximation family $(f_{F,G}\colon X(F)\to Y(G))_{(F,G)\in \Lambda_f}$, with $f_{F,G}$ in $\cC$, which we write as
\[
\tikzfig{f_FG_plate}
\]
in plate notation.
This morphism may also be denoted by \minitikzfig{fXY_inften} for brevity, where the double wire of the input and output of $f$ indicate that $X$ and $Y$ are infinite tensor products.
For simplicity, we will generally omit $(F,G)\in \Lambda_f$, since the index set is determined by the finite approximation family.
This notation is compatible in the obvious way with sequential and parallel composition in $\inftenexp{\cC}$ (see \cref{fig:plate_compositions}). Moreover, it will be convenient for graphical reasoning to allow plates to disappear when infinite tensors are not involved, i.e. $(f_{F,G}\colon X_F\to Y_G)_{(F,G)\in \Lambda_f}$ is a \emph{bona fide} morphism of $\cC$. More precisely,
\begin{equation}\label{eq:disappearplate}
\tikzfig{f_FG_plate} \quad =\quad  \tikzfig{fXY}
\end{equation}
whenever both $X$ and $Y$ belong to the image of $\cC \to \inftenexp{\cC}$.
\begin{figure}[t]

\[
\tikzfig{sequential_plates} \quad = \quad \tikzfig{plate_sequential}
\]
\[
\tikzfig{parallel_plates}\quad = \quad \tikzfig{plate_parallel}
\]
\caption{Sequential and parallel composition of the plate notation.}\label{fig:plate_compositions}
\end{figure}

We can use plate notation to define a variation of the construction of a symmetric monoidal category from generators and equations, which encompasses infinite tensor products. This will be useful in the following sections to derive axiomatisation results.

First, recall that a \emph{symmetric monoidal theory} (SMT) is a pair $(\Sigma, E)$, where $\Sigma$ is a signature of generators $o \colon m \to n$ with an arity $m \in \mathbb{N}$ and coarity $n \in \mathbb{N}$, and $E$ is a set of equations between $\Sigma$-terms. A $\Sigma$-term $c$ of type $m \to n$ will be represented graphically as a box with $m$ dangling wires on the left and $n$ on the right, also written $\tikzfig{cdiagram}$. Formally, $\Sigma$-terms are freely obtained by  sequential and parallel compositions of the generators in $\Sigma$ together with the identity $\tikzfig{id} \colon 1 \to 1$, the symmetry $\tikzfig{swap} \colon 2 \to 2$, and the `empty' diagram $\tikzfig{empty-diag} \colon 0 \to 0$. Sequential composition of $\Sigma$-terms $\tikzfig{cdiagram}$ and $\tikzfig{ddiagram}$ is depicted as $\tikzfig{horizontal-comp}$, of type $m \to v$. Parallel composition of $\Sigma$-terms $\tikzfig{c1diagram}$ and $\tikzfig{c2diagram}$ is depicted as $\tikzfig{vertical-comp}$, of type $m_1 + m_2 \to n_1 + n_2$. Given an SMT $(\Sigma, E)$, the symmetric monoidal category $\free{\Sigma, E}$ freely generated by $(\Sigma, E)$ has objects the natural numbers and morphisms $m \to n$ the $\Sigma$-terms of type $m \to n$ quotiented by $E$ and by the laws of symmetric strict monoidal categories (see \cref{fig:lawssmc}), with sequential and parallel composition defined as on the corresponding $\Sigma$-terms. More details can be found e.g. in \cite{baez2018props,zanasi2018}.
\begin{figure}[t]
	\begin{equation*}
				\begin{array}{c}
					{\tikzfig{smc/sequential-associativity} = \tikzfig{smc/sequential-associativity-1}}
					\\[1em]
					\scalebox{1}{\tikzfig{smc/unit-right} = \tikzfig{cdiagramnotypes} = \tikzfig{smc/unit-left}}
					\\[1em]
					\scalebox{1}{\tikzfig{smc/parallel-associativity} = \tikzfig{smc/parallel-associativity-1}}
					\qquad
					\scalebox{1}{ \tikzfig{smc/parallel-unit-above} = \tikzfig{cdiagramnotypes} =  \tikzfig{smc/parallel-unit-below}}
					\\[3em]
					\scalebox{1}{\tikzfig{smc/interchange-law} = \tikzfig{smc/interchange-law-1} }
					\\[2em]
					\scalebox{1}{\tikzfig{smc/sym-natural}= \tikzfig{smc/sym-natural-1}}
					\qquad\quad
					\scalebox{1}{\tikzfig{smc/sym-iso} = \tikzfig{smc/id2}}
				\end{array}
			\end{equation*}
	\caption{Laws of symmetric strict monoidal categories}\label{fig:lawssmc}
\end{figure}
\begin{example}\label{ex:causcirc}
The SMC freely obtained by generators
\[
\minitikzfig{del} \qquad \minitikzfig{copy}\qquad \tikzfig{andgate}\qquad \tikzfig{notgate}\qquad \text{and}\qquad \tikzfig{pstates}\, \text{ for all }p\in [0,1],
\]
and set of equations as in \cite[Figure 4]{piedeleu2025completeaxiomatisation} is named $\cat{CausCirc}$ in \cite{piedeleu2025completeaxiomatisation}, as we may regard its string diagrams as causal circuits. Also in \cite{piedeleu2025completeaxiomatisation} it is proven that $\cat{CausCirc} \cong \binstoch$, meaning that such an SMT \emph{axiomatises} $\binstoch$.
An alternative axiomatisation of $\binstoch$ was provided in \cite[Section 4]{digiorgio2025parametric_iteration}.
\end{example}

We note that $\free{\Sigma,E}$ is semicartesian with cancellative deletions if and only if both of the following are satisfied:
\begin{enumerate}
	\item There is only one $\Sigma$-term (up to $=_E$) of type $n\to 0$, for any $n\in \mathbb{N}$;
	\item Given two $\Sigma$-terms $f$ and $g$, we have $f\otimes \del_n =_E g \otimes \del_n$ if and only if $f =_E g$.
\end{enumerate}
Under these assumptions, we may define how to freely obtain a category with infinite tensor products from $(\Sigma,E)$ as follows.

\begin{definition}\label{def:inffree} Given an SMT $(\Sigma, E)$ satisfying the two assumptions above, let $\inffree{\Sigma,E}$ be defined as the category (with infinite tensor products) whose objects are natural numbers together with an additional object $\infty$, and whose morphisms are given by finite approximation families of $\Sigma$-terms, subject to the equations of \cref{fig:plate_compositions} and \eqref{eq:disappearplate}.
\end{definition}
In particular, while this may be viewed as freely generating from generators with the addition of plates, in $\inffree{\Sigma,E}$ all morphisms are finite approximation families written using the plate notation. The plate notation can then be removed using \eqref{eq:disappearplate}.

The proof that $\inffree{\Sigma,E}$ is indeed a category follows by the same argument used to prove that $\inftenexp{\cC}$ is a category (see Appendix~\ref{sec:inften_cat}).

Regarding the tensor product, one sets $\infty\otimes N =\infty$ for any $N$ natural number or $\infty$, and the tensor product is obtained by reindexing the finite approximation families according to chosen fixed bijections $\mathbb{N}\sqcup \lbrace 0, 1, \dots , n-1\rbrace \cong \mathbb{N}$ and $\mathbb{N}\sqcup \mathbb{N}\cong \mathbb{N}$. These choices are sufficient to yield a coherent system: this follows from the fact that modifying a functor using a choice of isomorphisms for each object produces a naturally isomorphic functor.\footnote{For example, if we imagine $\inffree{\Sigma,E}$ inside $\inftenexp{\free{\Sigma,E}}$, we are redefining the functor $\otimes$ up to bijections of objects, e.g.~$1^{\mathbb{N}\sqcup \mathbb{N}}\cong 1^{\mathbb{N}}$.}
They are also necessary to ensure that associators, unitors and permutations are well-defined: for example, if we call $\phi$ the bijection $\mathbb{N}\sqcup \mathbb{N}\xrightarrow{\cong} \mathbb{N}$, then the associator is a reindexing with respect to the bijection
\[
\begin{tikzcd}[column sep=large]
	\mathbb{N}\ar[r,"\phi^{-1}"] & \mathbb{N}\sqcup \mathbb{N}\ar[r,"\id \sqcup \phi^{-1}"]& \mathbb{N}\sqcup \mathbb{N}\sqcup \mathbb{N}\ar[r,"\phi\sqcup \id"] & \mathbb{N}\sqcup \mathbb{N} \ar[r,"\phi"] & \mathbb{N}.
\end{tikzcd}
\]
Moreover, we can verify that $\inffree{\Sigma,E}$ indeed has infinite tensor products (combine Theorem~\ref{thm:general_adjunction_ITP} with Proposition~\ref{prop:explicit_inften}).

\begin{remark}\label{rem:reindexing}
It is worth noting that the parallel composition in \cref{fig:plate_compositions} is going to be influenced by \emph{reindexing} according to the chosen bijections for the tensor product of $\inffree{\Sigma,E}$.
To see exactly how, consider as an example two parallel morphisms $f,g\colon 0 \to \infty$, respectively corresponding to finite approximation families $f_{F}\colon 0 \to n_{f,F}$ and $g_{H}\colon 0\to n_{g,H}$, where $F,H\subseteq \mathbb{N}$ and the natural numbers $n_{f,F}$ and $n_{g,H}$ are associated to these choices.
Then the tensoring in $\inffree{\Sigma,E}$ of $f$ and $g$ is given by the finite approximation family
\[ (f\otimes g)_{\phi(F\sqcup H)} = f_{F}\otimes g_{H},\] 
where $\phi$ is the chosen bijection $\mathbb{N}\sqcup \mathbb{N}\xrightarrow{\cong} \mathbb{N}$.
\end{remark}

We now state that the infinite tensor product construction of Definition~\ref{def:inften}, when applied to a freely generated SMC $\free{\Sigma,E}$, is presented by the same $(\Sigma,E)$ via the $\inffree{-}$ construction above.

\begin{proposition}\label{cor:semicartesian_theory}
	Let $(\Sigma,E)$ be a symmetric monoidal theory such that $\free{\Sigma,E}$ is a semicartesian category with cancellative deletions.
	Then $\inffree{\Sigma,E}\simeq \inftenexp{(\free{\Sigma,E})}$ via an ITP-preserving symmetric monoidal functor.
\end{proposition}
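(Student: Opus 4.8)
The plan is to realise the equivalence as an ITP-preserving symmetric monoidal functor $\widetilde\iota\colon\inften{(\free{\Sigma,E})}\to\inffree{\Sigma,E}$ produced directly by \cref{thm:adjunction_ITP}. First, the assignments $n\mapsto n$ and, on a $\Sigma$-term $c$, $c\mapsto$ the plate-free morphism with sole component $c$ define, via \eqref{eq:disappearplate}, a symmetric monoidal functor $\iota\colon\free{\Sigma,E}\to\inffree{\Sigma,E}$; it is faithful, since two $\Sigma$-terms with the same plate-free image are equal modulo $E$. Because $\inffree{\Sigma,E}$ has infinite tensor products, \cref{thm:adjunction_ITP} yields an ITP-preserving symmetric monoidal functor $\widetilde\iota$ restricting to $\iota$ along the canonical embedding $\free{\Sigma,E}\hookrightarrow\inften{(\free{\Sigma,E})}$. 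It then remains only to show that $\widetilde\iota$ is an equivalence; functoriality, monoidality, and ITP-preservation are already in hand.

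Next I would establish essential surjectivity by classifying the objects of $\inften{(\free{\Sigma,E})}$ up to isomorphism. Each such object is an abstract infinite tensor product of a countable family of objects of $\free{\Sigma,E}$, i.e.\ of natural numbers $(n_j)_{j\in\J}$. Writing each $n_j$ as an $n_j$-fold tensor power of the generating object $1$ and using the compositionality of infinite tensor products from (the proof of) \cref{prop:inften_infprod} — together with $I^{\otimes\infty}\cong I$ and the fact that an infinite tensor product over a finite index set is an ordinary tensor product — one finds that the object is isomorphic to the finite object $\sum_j n_j$ when that sum is finite, and to a fixed infinite tensor power $\mathbf 1^{\otimes\infty}$ of $1$ otherwise. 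Since $\widetilde\iota$ restricts to $\iota$ and preserves infinite tensor products, it sends each finite object $n$ to $n$ and $\mathbf 1^{\otimes\infty}$ to the infinite tensor power of $\iota(1)=1$ in $\inffree{\Sigma,E}$, namely $\infty$. As the objects of $\inffree{\Sigma,E}$ are precisely the natural numbers together with $\infty$, essential surjectivity follows.

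The hard part will be fullness (faithfulness can also be extracted here, or separately via \cref{prop:faithful} after checking that $\inffree{\Sigma,E}$ has cancellative deletions by the same pointwise argument used for $\inften{\cC}$). Since fullness and faithfulness are insensitive to replacing objects by isomorphic ones, the classification above lets me assume $X,X'$ are in standard form. Then both $\inften{(\free{\Sigma,E})}(X,X')$ and $\inffree{\Sigma,E}(\widetilde\iota X,\widetilde\iota X')$ are, by \cref{def:inften} and \cref{def:inffree}, sets of equivalence classes of compatible families of $\free{\Sigma,E}$-morphisms — equivalently, of $\Sigma$-terms modulo $E$ — over matching index posets, once the index-set conventions of the two constructions are reconciled (the distinguished point $*\in\J$ of \cref{def:inften}, and the bijections $\mathbb{N}\sqcup\mathbb{N}\cong\mathbb{N}$, $\mathbb{N}\sqcup\{0,\dots,n-1\}\cong\mathbb{N}$ used to define the monoidal structure of $\inffree{\Sigma,E}$). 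Tracing the construction of $\widetilde\iota$ in the proof of \cref{thm:adjunction_ITP} — where $\widetilde\iota(f)$ is the morphism cut out by the marginals $\iota(f_{F,G})\circ\pi_F$ of \eqref{eq:phi_F}, which back in $\inffree{\Sigma,E}$ is represented by the family $(f_{F,G})$ itself — shows that, modulo this reindexing, $\widetilde\iota$ induces the identity bijection between the two hom-sets. Hence $\widetilde\iota$ is fully faithful, hence an equivalence.

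I expect the genuine obstacle to be precisely this reconciliation of index-set conventions, so that ``compatible family of $\Sigma$-terms modulo $E$'' denotes literally the same data on both sides; the object classification via \cref{prop:inften_infprod} is the other point that needs some care, while everything else is routine.
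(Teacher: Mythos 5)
Your proposal is correct and follows essentially the same route as the paper: obtain the functor from \cref{thm:adjunction_ITP}, get faithfulness from \cref{prop:faithful}, and read off fullness from the definition of $\inffree{\Sigma,E}$ as compatible families of $\Sigma$-terms. The paper's proof is just two sentences, treating essential surjectivity, the cancellative-deletions check for $\inffree{\Sigma,E}$, and the index-set reconciliation as immediate, whereas you spell these out explicitly (and correctly).
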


\begin{proof}
By applying Theorem~\ref{thm:general_adjunction_ITP} (via Proposition~\ref{prop:explicit_inften}) and Proposition~\ref{prop:faithful}, we have a faithful ITP-preserving monoidal functor $\inftenexp{\free{\Sigma,E}} \to\inffree{\Sigma,E}$.
	Fullness is immediate by definition of $\inffree{\Sigma,E}$.
\end{proof}

We emphasise that $\inffree{\Sigma,E}$ and $\inftenexp{(\free{\Sigma,E})}$ are \emph{not} isomorphic.
In fact, these two categories reflect different trade-offs.
The former, $\inffree{\Sigma,E}$, admits a single infinite tensor product $\infty$, which may appear more natural at first glance, but then $\infty\otimes \infty = \infty$ requires to fix a chosen bijection $\mathbb{N}\sqcup \mathbb{N} \cong \mathbb{N}$, as mentioned above.
The latter, $\inftenexp{(\free{\Sigma,E})}$, instead allows multiple infinite tensor products, with the advantage that its tensor product does not rely on any choice of bijections.
Regardless of this distinction, in both categories one can reason equationally using the SMT $(\Sigma,E)$ together with the equations of \cref{fig:plate_compositions} and \eqref{eq:disappearplate}.

Combining Theorem~\ref{thm:general_adjunction_ITP} and Proposition~\ref{cor:semicartesian_theory}, one can prove the expected universal property for the free construction.

\begin{corollary}\label{cor:univproperty}
	Let $(\Sigma,E)$ be an SMT satisfying the two assumptions preceding Definition~\ref{def:inffree}, and consider a semicartesian category $\cD$ with infinite tensor products.
	Then a symmetric monoidal functor $\free{\Sigma,E}\to \cD$ extends to an ITP-preserving symmetric monoidal functor $\inffree{\Sigma,E}\to \cD$.
\end{corollary}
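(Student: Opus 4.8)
The plan is to observe that the desired statement is just the universal property of \cref{thm:adjunction_ITP} transported along the equivalence supplied by \cref{cor:semicartesian_theory}. Write $\cC \coloneqq \free{\Sigma,E}$, which by the two standing assumptions is semicartesian with cancellative deletions, and let $\iota_\infty \colon \cC \to \inften{\cC}$ and $\iota \colon \cC \to \inffree{\Sigma,E}$ be the two canonical symmetric monoidal functors. Recall from the proof of \cref{cor:semicartesian_theory} that applying \cref{thm:adjunction_ITP} to $\iota$ yields an ITP-preserving symmetric monoidal functor $\Theta \colon \inften{\cC} \to \inffree{\Sigma,E}$ with $\Theta \circ \iota_\infty \cong \iota$, and that $\Theta$ is an equivalence (faithful by \cref{prop:faithful}, full by inspection of \cref{def:inffree}); fix a quasi-inverse $\Theta^{-1} \colon \inffree{\Sigma,E} \to \inften{\cC}$.

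Given the symmetric monoidal functor $\phi \colon \cC \to \cD$ from the hypothesis, the first step is to feed it into \cref{thm:adjunction_ITP}: since $\cD$ is semicartesian with infinite tensor products, this produces an ITP-preserving symmetric monoidal functor $\tilde\phi \colon \inften{\cC} \to \cD$ with $\tilde\phi \circ \iota_\infty \cong \phi$. I then take $\tilde\phi \circ \Theta^{-1} \colon \inffree{\Sigma,E} \to \cD$ as the claimed extension. That it extends $\phi$ is a short diagram chase along the fixed natural isomorphisms: $\bigl(\tilde\phi \circ \Theta^{-1}\bigr) \circ \iota \cong \tilde\phi \circ \Theta^{-1} \circ \Theta \circ \iota_\infty \cong \tilde\phi \circ \iota_\infty \cong \phi$, so the triangle over $\cC$ commutes up to natural isomorphism; its uniqueness up to natural isomorphism, if wanted, follows from that of $\tilde\phi$ in \cref{thm:adjunction_ITP}.

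It remains to check that $\tilde\phi \circ \Theta^{-1}$ is ITP-preserving. The functor $\tilde\phi$ is ITP-preserving by construction, and a composite of ITP-preserving symmetric monoidal functors is again ITP-preserving, since the image of a concrete infinite tensor product is successively sent to infinite tensor products with the finite marginalizations preserved at each stage. So the only real point is that $\Theta^{-1}$ is ITP-preserving, and this is where I expect the main (though still mild) obstacle to lie: one must see that the quasi-inverse of an ITP-preserving equivalence again preserves infinite tensor products. Concretely, given a concrete infinite tensor product $\bigotimes_{k} Y_k$ in $\inffree{\Sigma,E}$ with its limiting cone of finite marginalizations, transporting this cone through the natural isomorphisms $\Theta \Theta^{-1} \cong \id$ and $\Theta^{-1}\Theta \cong \id$ exhibits $\Theta^{-1}(\bigotimes_k Y_k)$, together with the images of the marginalizations, as a cone over the corresponding abstract infinite tensor product in $\inften{\cC}$ that is limiting and stable under tensoring (the latter by \cref{prop:inften_infprod}); hence it is a concrete infinite tensor product of the family $(\Theta^{-1}(Y_k))_k$, which is exactly ITP-preservation. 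This completes the verification, and the statement follows.
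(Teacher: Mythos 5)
Your proof is correct and takes essentially the same route as the paper's: apply \cref{thm:adjunction_ITP} to obtain $\tilde\phi\colon \inften{(\free{\Sigma,E})}\to\cD$ and compose with (a quasi-inverse of) the equivalence of \cref{cor:semicartesian_theory}. The only difference is that you explicitly verify the point the paper leaves implicit, namely that the quasi-inverse of an ITP-preserving equivalence is again ITP-preserving; this is a genuine (if mild) detail and your cone-transport argument for it is sound.
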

In other words, $\inffree{\Sigma,E}$ is initial among the semicartesian categories with infinite tensor products in which $(\Sigma,E)$ can be interpreted.
\begin{proof}
	By Theorem~\ref{thm:general_adjunction_ITP} and Proposition~\ref{prop:explicit_inften}, $\free{\Sigma,E}\to \cD$ extends to $\inftenexp{(\free{\Sigma,E})}\to \cD$. By composing this functor with the equivalence $\inffree{\Sigma,E}\simeq \inftenexp{(\free{\Sigma,E})}$ provided in Proposition~\ref{cor:semicartesian_theory}, the statement follows.
\end{proof}

\begin{remark}
The results of this section may be developed for $\inften{\cC}$ rather than $\inftenexp{\cC}$, being the two categories equivalent (Proposition~\ref{prop:explicit_inften}). 
More broadly, the present treatment can be adapted to the case where $\cC$ does not necessarily have cancellative deletions.
We privileged an exposition based on $\inftenexp{\cC}$ because finite approximation families encode useful information for providing a presentation by generators and equations, which are more implicit in the plain colimits formulation of $\inften{\cC}$ (\emph{cf.} \eqref{eq:lim_colim_proobj}).
\end{remark}

\section{Infinite Tensor Products and Markov Categories}\label{sec:markov}
As mentioned, our motivating examples are not just semicartesian but also Markov categories. Given the relevance of these structures in the categorical probability literature, it is of interest to show that whenever $\cC$ is a Markov category, then so is $\inftenexp{\cC}$.

\begin{definition}
	A \newterm{Markov category} is a semicartesian category $\cC$ where every object $X$ comes equipped with a cocommutative comonoid $\cop_X \colon X \to X\otimes X$ (whose counit is given by $\del_X$) compatible with the monoidal product. In string diagrams, $\cop_X$ is usually denoted by \minitikzfig{copy} and the cocommutative comonoid equations read as follows.
	\[
	{\tikzfig{copy_commutative}}\qquad\qquad {\tikzfig{copy_associative}}
	\]
	\[
	{\tikzfig{del_unit}}
	\]
	Compatibility with the monoidal product is written as follows.
	\[
	\tikzfig{copyXY}\quad =\quad \tikzfig{copyXcopyY}
	\]
	(The delete maps are obviously compatible since they are the determined by terminality of $I$).
\end{definition}
All our examples ($\finstoch$, $\finstoch_{\nemp}$, $\binstoch$, $\borelstoch$, $\borelstoch_{\nemp}$, and $\stoch$) are Markov categories.
In $\finstoch$, $\finstoch_{\nemp}$, and $\binstoch$, we have $\cop_X (y,z\given x) \coloneqq 1$ if $x=y=z$ and $0$ otherwise. 
More details can be found in \cite{chojacobs2019strings,fritz2019synthetic}.

\begin{proposition}\label{prop:infinite_markov}
	Let $\cC$ be a Markov category with cancellative deletions. Then $\inftenexp{\cC}$ is a Markov category as well.
\end{proposition}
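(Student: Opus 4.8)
The plan is to equip each object $X\colon\pfin[\J_X]^{\op}\to\cC$ of $\inften{\cC}$ with a copy map $\cop_X\colon X\to X\otimes X$ assembled pointwise from the copy maps of $\cC$, and then verify the Markov-category axioms by reducing them, level by level, to the corresponding axioms in $\cC$. First I would unwind what $X\otimes X$ is according to \cref{def:inften}: its index set is $\J_X\sqcup\J_X$ (more precisely two disjoint copies, which after reindexing I will treat as $\J_X^{(0)}\sqcup\J_X^{(1)}$ with a fixed bijection $\J_X\sqcup\J_X\cong\J_X\sqcup\J_X$ whenever strictness of the construction forces me to pick one, exactly as in the discussion after \cref{def:inffree}), and $(X\otimes X)(F)=X(F\cap\J_X^{(0)})\otimes X(F\cap\J_X^{(1)})$. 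So a compatible family $X\to X\otimes X$ is a collection of $\cC$-morphisms $X(E)\to X(F_0)\otimes X(F_1)$ subject to naturality, covering and hereditariness.

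The natural candidate is: for the pair $(E,(F_0,F_1))$ with $F_0=F_1=E$, take $\cop_{X(E)}\colon X(E)\to X(E)\otimes X(E)$, the copy map in $\cC$; extend by hereditariness to all $E'\supseteq E$ via $\pi_{E',E}$ composed with $\cop_{X(E)}$ — here I would first check, using compatibility of $\cop$ with $\otimes$ and with $\del$ in $\cC$, that this is well-defined and that the naturality squares of \cref{def:compatiblefamily} commute. The covering condition is clear since for each $(F_0,F_1)$ one may take $E=F_0\cup F_1$ and land in $X(F_0\cup F_1)\otimes X(F_0\cup F_1)$, then marginalise. Counitality ($\del_X\circ$ first projection of $\cop_X=\id$, and symmetrically) is immediate from counitality in $\cC$ at each level, together with equation~\eqref{eq:disappearplate} and the description of $\del_X$ in $\inften{\cC}$ as the pointwise family of deletions. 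Cocommutativity and coassociativity likewise follow pointwise: at each finite stage the diagram is literally an instance of the cocommutative-comonoid equation in $\cC$, and since morphisms in $\inften{\cC}$ are equivalence classes determined by their values on a cofinal set of index pairs (by naturality plus the covering condition), equality on each level gives equality in $\inften{\cC}$. The same reduction handles compatibility of $\cop_{X\otimes Y}$ with the monoidal product: $(X\otimes Y)(F)=X(F\cap\J_X)\otimes Y(F\cap\J_Y)$, so the required identity unfolds to the $\cC$-level compatibility of $\cop_{X(\cdot)}$ and $\cop_{Y(\cdot)}$ with $\otimes$, up to the symmetry that reorders the four factors.

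The main obstacle is bookkeeping around strictness and the chosen bijections: because $\inften{\cC}$ is built so that the functor $\cC\to\inften{\cC}$ is strict and so that $\J_{X\otimes Y}=\J_X\sqcup\J_Y$ on the nose, one must be careful that $\cop_X$ as a compatible family is genuinely natural with respect to \emph{all} pairs $(E,(F_0,F_1))\in\Lambda$, not merely the diagonal ones $F_0=F_1$, and that hereditariness is used correctly so that the equivalence class is independent of the chosen representatives. Concretely, the delicate point is verifying the naturality square
\[
\begin{tikzcd}
X(E')\ar[r,"\cop_{F_0',F_1'}"]\ar[d,"\pi_{E',E}"'] & X(F_0')\otimes X(F_1')\ar[d,"\pi_{F_0',F_0}\otimes\pi_{F_1',F_1}"]\\
X(E)\ar[r,"\cop_{F_0,F_1}"] & X(F_0)\otimes X(F_1)
\end{tikzcd}
\]
for arbitrary nested pairs, which comes down to the interaction of $\cop$ with $\del$ in $\cC$ (namely $(\del\otimes\id)\circ\cop=\id=(\id\otimes\del)\circ\cop$ and naturality of $\cop$ with respect to deletions); once this single lemma is in place, every remaining Markov axiom is a routine pointwise transfer, and the fact that $\inften{\cC}$ is already known to be a semicartesian category (\cref{sec:inften_cat}) with cancellative deletions supplies the ambient structure for free.
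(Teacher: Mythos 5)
Your proposal is correct and follows essentially the same route as the paper: define $\cop_X$ as the pointwise family of copy maps $\cop_{X(F)}$, check it is a compatible family using compatibility of $\cop$ with $\otimes$ and counitality of $\del$, and then transfer the cocommutative comonoid axioms level by level. Your extra care about the naturality squares at non-diagonal index pairs $(E,(F_0,F_1))$ is exactly the detail the paper compresses into ``thanks to compatibility with the monoidal product,'' so nothing is missing on either side.
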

\begin{proof}
For every abstract infinite tensor product $X$, the collection of copy maps $(\cop_{X_F}\colon X_F\to X_F\otimes X_F)_{F\in \pfin[\J_X]}$ gives rise to a finite approximation family, thanks to compatibility with the monoidal product.
	We then define $\cop_X$ to be (the equivalence class of) this finite approximation family.
	Graphically,
	\[
	\tikzfig{copy_inften} \quad \coloneqq\quad \tikzfig{copy_plate}
	\]
	By \cref{fig:plate_compositions}, the cocommutative comonoid equations hold because they hold inside the plate. Similarly, compatibility with the monoidal product holds, therefore concluding the proof.
\end{proof}
In particular, by construction of the copy of $X$, a direct computation shows that $X$ is a Kolmogorov product in the sense of \cite{fritzrischel2019zeroone}.
\begin{remark}
	The reasoning of Proposition~\ref{prop:infinite_markov} can be adapted to the general case of $\cC$ being a Markov category (not necessarily with cancellative deletions), by using $\inften{\cC}$ instead of $\inftenexp{\cC}$ and by exploiting the description of morphisms offered in \eqref{eq:lim_colim_proobj} in place of finite approximation families.
\end{remark}

\section{A Stone Space Characterisation of $\inften{\finstoch}$}\label{sec:stone}
In this section we explicitly describe what $\inften{\finstoch}$ and $\inftenexp{\binstoch}$ are, therefore highlighting what information about the continuous setting can be recovered from $\finstoch$ and $\binstoch$. 
By Proposition~\ref{prop:faithful}, these infinite tensor products extensions can be viewed as subcategories of $\borelstoch$.
With this in mind, it suffices to understand what is the relevant notion of Markov kernels associated to $\inften{\finstoch}$ and $\inften{\binstoch}$.

Since $\inften{(-)}$ is simply a restriction of the pro-completion (see the proof of Theorem~\ref{thm:general_adjunction_ITP}), we can draw inspiration from the pro-category of the category of finite sets.
In fact, the discussion in this section can be generalised to a pro-completion procedure tailored to Markov categories; see \cite{moss2026causalMarkov} for further details.

With pro-completions in mind, we focus on \newterm{Stone spaces}, i.e.~compact Hausdorff spaces with a basis of clopen sets.
In particular, we will denote by {$\clopen{X}$} the clopen sets of a Stone space $X$.
These spaces are particularly well-known in the theory due to Stone duality, which states that $\clopen{-}$ yields a contravariant equivalence between the category of Stone spaces and continuous maps and the category of Boolean algebras and algebra homomorphisms.

In this setting, the most important Stone space for our purposes is the \newterm{Cantor space} $2^{\N}$, which, as the notation suggests, is given by the product of countably many copies of $2=\lbrace 0,1\rbrace$.
By Brouwer's theorem \cite{brouwer1910structure}, the Cantor space is the only non-empty Stone space that is second-countable and without isolated points. We now equip Stone spaces with a special type of Markov kernels.

\begin{definition}
	Let $X$ and $Y$ be two Stone spaces. A \newterm{locally constant Markov kernel} $X$ to $Y$, denoted by $f\colon X \oarrow Y$, is given by a function
	\[
	\begin{array}{ccccc}
		f &\colon& \clopen{Y}\times X & \to & [0,1]\\
		&& (U,x)& \mapsto & f(U\given x)
	\end{array}
	\]
	such that
	\begin{itemize}
		\item For all $U \in \clopen{Y}$, the function $f(U\given -)\colon X \to [0,1]$ is locally constant, i.e.\ for every $x \in X$ there exists an open $A\owns x$ such that $f(U\given x)= f(U\given y)$ for all $y\in A$;
		\item For all $x\in X$, the function $f(- \given x)\colon \clopen{Y}\to [0,1]$ is a finitely-additive probability measure, i.e.\ $f(Y\given x)=1$ and for every finite collection of disjoint clopen sets $\lbrace U_i\rbrace_{i=1}^n$, \[ f\left(\bigcup_{i=1}^n U_i\,\bigg|\, x\right)= \sum_{i=1}^n f(U_i\given x).\]
	\end{itemize}
\end{definition}
Stone spaces with locally constant Markov kernels form a Markov category denoted by $\stonestoch$ (see Appendix~\ref{sec:lcMarkov} for details). 
Readers interested in the monadic nature of these kernels may consult \cite[p.~5]{moss2026causalMarkov} ($\textsf{BKer}$ there is equivalent to $\stonestoch$ by Lemma 7, \emph{loc.~cit.}), where effect algebras are used to overcome a shortcoming of the Radon monad in the present context.

To the best of our knowledge, locally constant Markov kernels have not appeared previously in the literature. 
Their main relevance lies in providing a bridge between the discrete and continuous settings.
Nonetheless, they already capture meaningful nontrivial examples, as explored in Example~\ref{ex:lc_Mker}. 

\begin{remark}
	Locally constant Markov kernels can be defined as Markov kernels from $X$ to $Y$ (see Example~\cref{ex:borelstoch}), where both spaces are equipped  with the Baire $\sigma$-algebra (i.e., the $\sigma$-algebra generated by clopen sets), such that $f(U\given -)$ is locally constant for all clopen sets $U$. 
	This follows from Carath\'{e}odory's extension theorem, since a finitely-additive probability measure on clopen sets\footnote{In this setting, finite-additivity and $\sigma$-additivity are equivalent because Stone spaces are compact.} gives rise to a $\sigma$-additive probability measure on the Baire $\sigma$-algebra.
	In particular, this alternative description allows us to integrate against locally constant Markov kernels.
	Nonetheless, we opted for the presentation above so that in the treatment below every $U$ occurring in $f(U\given x)$ is a clopen instead of a general measurable subset.
\end{remark}

\begin{remark}
The emergence of local constancy is motivated by the strong connection between this property and finite approximation families.
Let us consider the Cantor space $2^{\mathbb{N}}$. 
For a locally constant Markov kernel, $f(U\given x)$ does not depend on the full information obtained by $x$, but only on an open set $A\owns x$ on which the kernel is constant.
Since every open set in $2^{\mathbb{N}}$ is a union of sets of the form $V=V_1 \times V_2 \times \dots \times V_n \times 2 \times \dots$, where only the first $n$ coordinates are proper subsets of $2$, it follows that $f(U\given x)$ depends only on finitely many coordinates of $x$. 
In other words,
\[
f(U\given x_1, x_2,\dots, x_n, \dots) = f(U \given x_1, x_2, \dots, x_n)
\]
for some $n$. This idea lies at the core of the proof of Theorem~\ref{thm:inften_of_finstoch}.
\end{remark}

\begin{example}\label{ex:nonlc_Mker}
	Not all Markov kernels are locally constant. For the sake of an example, we define a Markov kernel from the Cantor space $2^{\mathbb{N}}$ to $2$ given by
	\[
	f(U\given (x_n) ) \coloneqq \begin{cases}
		\sum_{n=0}^{\infty} \frac{2 x_n}{3^{n+1}} & \text{if }U=\lbrace 1 \rbrace \\
		1- \sum_{n=0}^{\infty} \frac{2 x_n}{3^{n+1}} & \text{if }U=\lbrace 0 \rbrace
	\end{cases}
	\]
	This is clearly not locally constant, since any choice of $(x_n)$ changes the value of $f(U\given (x_n))$, although $(x_n) \mapsto \sum_{n=0}^{\infty} \frac{2 x_n}{3^{n+1}}$ is the standard continuous embedding $2^{\mathbb{N}}\hookrightarrow [0,1]$.
\end{example}
Nonetheless, all continuous functions yield locally constant Markov kernels by means of delta measures (see Appendix~\ref{sec:lcMarkov} for details). We now present three examples of locally constant Markov kernels that demonstrate the expressive power of this concept.

\begin{example}\label{ex:lc_Mker}
	Note that $\clopen{2^{\mathbb{N}}}$ is the collection of sets of the form $U_F \times 2^{\mathbb{N}\setminus F}$, where $F\subseteq \mathbb{N}$ is finite and $U_F\subseteq 2^F$ is arbitrary.
	\begin{enumerate}
		\item\label{it:lc_p} A simple example of a locally constant Markov kernel is $p\colon I \oarrow 2^{\mathbb{N}}$, defined by
		\[
		p(U_F\times 2^{\mathbb{N}\setminus F}\given ) \coloneqq \frac{\# (U_F )}{2^{\#(F)}}
		\]
		where $\#(A)$ denotes the size of $A$.
		This construction is similar to the Lebesgue measure on the real line, in that it is translation-invariant: $p(U_F\times 2^{\mathbb{N}\setminus F}\given )= p(U_G \times 2^{\mathbb{N}\setminus G}\given)$ whenever $U_F$ and $U_G$ are in bijection.
		The fact that it is locally constant is immediate, since $I$ has only one element.
		\item A more involved example of a locally constant Markov kernel $f\colon 2^{\mathbb{N}}\oarrow 2^{\mathbb{N}}$ can be defined using the Markov kernel $p\colon I\oarrow 2^{\mathbb{N}}$ introduced in \cref{it:lc_p}. We set
		\[
		f(U\given x) \coloneqq \begin{cases}
			\frac{1}{2}p(U\given) + \frac{1}{2} & \text{if }x \in U\\
			\frac{1}{2}p(U\given) & \text{if }x \notin U
		\end{cases}
		\]
		Intuitively, the probability is biased toward the conditioning point $x$. The fact that $f(- \given x)$ is a finitely-additive probability measure follows from the same property of $p(- \given)$. Moreover, $f(U\given -)$ is constant on $U$ and on its complement, hence $f$ is locally constant.
		\item\label{it:midpoint} In a recent paper \cite{rischel2025universalmeasure}, Rischel describes a universal property of $\borelstoch$ formulated in terms of midpoint algebras $X\otimes X \to X$.\footnote{Midpoint algebras also appear implicitly in the axiomatisation of $\binstoch$ in \cite[Section 4]{digiorgio2025parametric_iteration}. In their notation, this corresponds to $\phi_A^{\frac{1}{2}}$, the compositon of the uniform probability on $\lbrace 0,1 \rbrace$ with the \emph{if-gate} $\phi_A\colon A \times \lbrace 0,1\rbrace \times A \oarrow A$.}
		Given our present interest in axiomatising continuous probability, it is worth observing that such algebras are in fact locally constant Markov kernels. 
		Indeed, they are given explicitly by the Markov kernels $f\colon X\times X\oarrow X$ defined by 
		\[
		f(U \given x,y) \coloneqq \frac{1}{2}\, \chi_U(x) +\frac{1}{2}\,\chi_U(y),
		\]
		where $\chi_U(z)=1$ if and only if $z\in U$. 
		By construction, $f(U\given -)$ is constant on $U \times U$, $U\times U^c$, $U^c\times U$, and $U^c\times U^c$; hence $f$ is locally constant, as claimed.
	\end{enumerate}
\end{example}

\begin{proposition}\label{prop:stonestoch_infinite}
	$\stonestoch$, the category of Stone spaces and locally constant Markov kernels, has all infinite tensor products. More explicitly, the infinite tensor product of $(X_j)_{j\in \J}$ is given by $\prod_{j\in \J}X_j$.
\end{proposition}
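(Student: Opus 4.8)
The plan is to exhibit the topological product $X \coloneqq \prod_{j\in\J} X_j$, equipped with the deterministic kernels induced by the coordinate projections, as a concrete infinite tensor product of $(X_j)_{j\in\J}$. First one checks that $X$ is again a Stone space: compactness is Tychonoff's theorem, Hausdorffness is clear, and a base of clopen sets is given by the cylinders $p_F^{-1}(V)$ with $F\subseteq\J$ finite and $V\in\clopen{X_F}$, where $X_F\coloneqq\prod_{j\in F}X_j$ --- equivalently, this is Stone duality turning the coproduct of the Boolean algebras $\clopen{X_j}$ into the product of the $X_j$. For finite $F\subseteq F'$ write $p_F\colon X\to X_F$ and $p_{F',F}\colon X_{F'}\to X_F$ for the continuous projections, and let $\pi_F\colon X\oarrow X_F$ and $\pi_{F',F}\colon X_{F'}\oarrow X_F$ be the associated deterministic kernels, i.e.\ $\pi_F(U\given x)=1$ if $p_F(x)\in U$ and $0$ otherwise. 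These are locally constant since $p_F^{-1}(U)$ is clopen, and since $p_{F',F}\circ p_{F'}=p_F$ the family $(\pi_F)_F$ is a cone over the abstract infinite tensor product diagram associated to $(X_j)_{j\in\J}$.

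The combinatorial heart of the argument is the remark that, because $X$ is compact, every clopen $W\subseteq X$ is a finite-dimensional cylinder: covering $W$ by basic clopen cylinders and extracting a finite subcover yields a finite $F$ and a clopen $V\subseteq X_F$ with $W=p_F^{-1}(V)$; moreover $p_F^{-1}\colon\clopen{X_F}\to\clopen{X}$ is injective because $p_F$ is surjective. Hence $\clopen{X}$ is the directed colimit $\varinjlim_F\clopen{X_F}$ of Boolean algebras, along the transition maps $p_{F',F}^{-1}$. The same reasoning, applied to $\prod_j X_j\times Y$, shows that every clopen of $X\otimes Y$ (the monoidal product in $\stonestoch$ being the Cartesian product of Stone spaces, cf.\ \cref{sec:lcMarkov}) is of the form $(p_F\times\id_Y)^{-1}(W)$ for some finite $F$ and clopen $W\subseteq X_F\times Y$.

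To verify the universal property, take a Stone space $A$ and a cone $(f_F\colon A\oarrow X_F)_F$ with $\pi_{F',F}\comp f_{F'}=f_F$ for all $F\subseteq F'$. Since composing a kernel with the deterministic kernel $\pi_{F',F}$ evaluates, on a clopen $V$, to the original kernel evaluated on $p_{F',F}^{-1}(V)$, the cone condition unfolds to $f_{F'}(p_{F',F}^{-1}(V)\given a)=f_F(V\given a)$ for all $V\in\clopen{X_F}$ and $a\in A$. Using the colimit description of $\clopen{X}$, we may therefore define $f\colon A\oarrow X$ unambiguously by $f(p_F^{-1}(V)\given a)\coloneqq f_F(V\given a)$: well-definedness is checked by passing two presentations of a clopen to a common finite level $F''$ and invoking injectivity of $p_{F''}^{-1}$ together with the cone condition. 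One then checks that $f(-\given a)$ is a finitely-additive probability measure on $\clopen{X}$ --- a finite disjoint family of clopens can be exhibited at a single finite level $F$, where the corresponding preimages stay disjoint by surjectivity of $p_F$ and $f_F(-\given a)$ is already finitely additive, and $f(X\given a)=f_F(X_F\given a)=1$ --- and that $f(p_F^{-1}(V)\given-)=f_F(V\given-)$ is locally constant, so $f$ is a morphism of $\stonestoch$. By construction $\pi_F\comp f=f_F$, and any $g$ with $\pi_F\comp g=f_F$ for all $F$ agrees with $f$ on every clopen $p_F^{-1}(V)$, hence $g=f$; thus $(\pi_F)_F$ is a limit cone. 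Running the identical argument over $\prod_j X_j\times Y$ shows this limit is preserved by $-\otimes Y$, so $X$ is the infinite tensor product $\bigotimes_{j\in\J}X_j$.

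The main obstacle is the clopen--cylinder lemma together with the bookkeeping that translates the cone condition for Markov kernels into plain compatibility on preimages of clopen sets; once the presentation $\clopen{X}=\varinjlim_F\clopen{X_F}$ is in place, the definition of $f$ and the verification that it is a locally constant Markov kernel satisfying the universal property are essentially forced. A secondary point to handle with care is preservation under $-\otimes Y$ --- a limit need not survive tensoring in general --- but here it does, precisely because the same finite-cylinder description of clopens holds for $\prod_j X_j\times Y$.
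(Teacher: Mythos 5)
Your proposal is correct and follows essentially the same route as the paper's proof: the key step in both is the compactness argument showing every clopen of $\prod_{j\in\J}X_j$ is a finite-dimensional cylinder, from which kernels into the product are determined by, and can be glued from, their finite marginals, with preservation under $-\otimes Y$ obtained by running the same argument over $\prod_j X_j\times Y$. Your additional framing via the directed colimit $\clopen{X}=\varinjlim_F\clopen{X_F}$ and the explicit verification of finite additivity and local constancy are welcome elaborations of details the paper leaves implicit, but they do not change the underlying argument.
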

\begin{proof}
	First of all, we note that $X\coloneqq \prod_{j\in \J}X_j$ is a Stone space, with compactness ensured by Tychonoff's theorem.
	Now, let us consider $A$ another Stone space, and consider $f\colon A\oarrow X$ a locally constant Markov kernel.
	We claim that $f$ is determined by the marginals $f_F$, given by the composition
	\[
\begin{tikzcd}
	A \ar[r,oarrow,"f"] & X\ar[r,oarrow,"\pi_F"] & X_F = \prod_{j\in F} X_j,
\end{tikzcd}
	\]
	with $F$ any finite subset of $\J$.
	By compactness, for every clopen $U$ in $X$ there exist $F\in \pfin$ and a clopen $U_F\subseteq X_F$ such that $U= U_F\times \prod_{j\in \J \setminus F} X_j$.
	So $f(U \given a)= f_F (U_F \given a)$.
	In particular, $f$ is determined by its marginals, as claimed.

	Conversely, if we consider a family $(f_F \colon A \oarrow X_F)_{F\in \pfin}$ that is compatible in the sense that $\pi_{F,F'}\comp f_F = f_{F'}$, for every clopen set $U\subseteq \prod_{j\in J}X_j$, we can define $f(U\given a) \coloneqq f_F(U_F\given a)$, and compatibility ensures that this definition is independent of $F$.
	The limit property is therefore ensured.
	The preservation under tensoring follows by arbitrariness of the family of Stone spaces.
\end{proof}

Finally, we obtain our second main theorem, yielding a characterisation of infinite tensor products over $\finstoch$.

\begin{theorem}\label{thm:inften_of_finstoch}
The ITP-preserving symmetric monoidal functor
\[
\phi \colon \inften{\finstoch}\to \stonestoch,
\]
obtained by the inclusion $\finstoch \hookrightarrow \stonestoch$, is fully faithful.
Moreover, its essential image is given by infinite products of finite sets in the sense of topological spaces.
\end{theorem}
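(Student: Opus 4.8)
The plan is to exploit \cref{prop:faithful} and \cref{prop:stonestoch_infinite} to reduce the statement to a concrete combinatorial description of locally constant Markov kernels between profinite sets. First I would note that $\finstoch \hookrightarrow \stonestoch$ is faithful (a finite set is a Stone space with the discrete topology, every function to $[0,1]$ on it is trivially locally constant, and composition/monoidal product agree), and that $\stonestoch$ has infinite tensor products by \cref{prop:stonestoch_infinite}; hence \cref{thm:adjunction_ITP} produces the ITP-preserving functor $\phi$, and \cref{prop:faithful} immediately gives faithfulness. So the real content is \emph{fullness}, together with the identification of the essential image.

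For the essential image: an object of $\inften{\finstoch}$ is an abstract infinite tensor product of finite sets $(X_j)_{j\in\J}$, and $\phi$ sends it to the limit of that diagram in $\stonestoch$, which by \cref{prop:stonestoch_infinite} is the topological product $\prod_{j\in\J} X_j$ with each $X_j$ discrete. These are precisely the second-countable profinite sets, i.e.\ ``infinite tensor products of finite sets in the sense of topological spaces''; conversely any such product arises this way (possibly with finitely many nontrivial factors, recovering the finite objects). This settles the essential-image claim once fullness is known.

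For fullness, the key step is: given second-countable profinite sets $X=\prod_{j\in\J_X}X_j$ and $Y=\prod_{j\in\J_Y}Y_j$ and a locally constant Markov kernel $h\colon X \oarrow Y$, construct a compatible family $(f_{F,G})$ of stochastic matrices realising it. The construction goes by two finiteness arguments. \emph{On the output side}: by compactness every clopen $U\subseteq Y$ has the form $U_G\times Y_{\J_Y\setminus G}$ for some finite $G$ and $U_G\subseteq Y_G$ (as used in \cref{prop:stonestoch_infinite}), so $h$ is determined by its $G$-marginals $h_G\colon X\oarrow Y_G$, each valued in the finite set $Y_G$. \emph{On the input side}: for fixed $G$, the finitely many functions $h_G(\{y\}\given -)\colon X\to[0,1]$, $y\in Y_G$, are locally constant; since $X$ is compact, each is continuous to a discrete image, hence factors through a finite quotient $\pi_F\colon X\to X_F$ for some finite $F$ — and by taking $F$ large enough (finite unions of the finitely many $F$'s) a single $F$ works for all $y\in Y_G$. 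This yields a stochastic matrix $f_{F,G}\colon X_F\to Y_G$ with $h_G = f_{F,G}\circ \pi_F$. One then checks naturality (compatibility of the $h_G$ under the $\pi_{G',G}$ forces the square for $f_{F,G}$, using that $\pi_F$ is epi since $\finstoch$ has cancellative deletions), closes the index set under $F'\supseteq F$ by precomposing with $\pi_{F',F}$ to get hereditariness, and observes the covering condition holds by the above. Finally one verifies $\phi$ of this compatible family is $h$, by running the formula \eqref{eq:phi_F} and matching it clopen-set by clopen-set with $h$.

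The main obstacle is the input-side finiteness argument: turning ``locally constant'' plus compactness of a profinite $X$ into the statement that the relevant functions factor through a \emph{single} finite quotient $X_F$. Local constancy only gives, pointwise, an open neighbourhood on which the function is constant; one must cover $X$ by such neighbourhoods, extract a finite subcover by compactness, refine it to basic clopen cylinders depending on finitely many coordinates, and take the union of those coordinate sets — and do this uniformly over the finitely many output values $y\in Y_G$. Care is also needed that the resulting assignment of $F$ to $G$ is monotone enough (or can be made so by enlarging $F$) for naturality and hereditariness to hold simultaneously; this is where the cancellative-deletions hypothesis (epi-ness of the $\pi_F$) does the bookkeeping work.
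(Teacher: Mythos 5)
Your proposal is correct and follows essentially the same route as the paper: faithfulness via \cref{prop:faithful}, the essential image read off from \cref{prop:stonestoch_infinite}, and fullness by first marginalising to $f_G\colon X\oarrow Y_G$ using that every clopen of $Y$ is a cylinder over a finite $G$, then using local constancy plus compactness of $X$ to refine the finitely many level sets into basic cylinders and extract a single finite coordinate set $F$ through which $f_G$ factors, yielding the compatible family $f_{F,G}$. Your write-up is in fact somewhat more explicit than the paper's about the finite-subcover refinement and the naturality/hereditariness bookkeeping, which the paper leaves as "a direct check".
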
 
In particular, by Brouwer's theorem \cite{brouwer1910structure}, any object in the essential image is isomorphic either to a finite set or to the Cantor space.
Moreover, $\phi$ preserves the copy map because in both settings the finite marginalizations are deterministic. In other words, this functor is strong gs-monoidal in the sense of \cite{fritz2023gsmonoidalfunctors}.
\begin{proof}
	First, recall that the functor is obtained by Theorem~\ref{thm:general_adjunction_ITP}. 
	Since the only object in $\finstoch$ that does not have cancellative deletions is the empty set, we obtain faithfulness from Proposition~\ref{prop:faithful} after restricting to $\finstoch_{\nemp}$ and then adjoining the empty set.

	We therefore only need to show fullness.
	Now, given a morphism between infinite products of finite sets $X=\prod_{j\in \J_X} X_j$ and $Y=\prod_{j \in \J_Y} Y_j$, we wish to show that $f\colon X\oarrow Y$ is uniquely determined by a finite approximation family $f_{F,G} \colon X_F \oarrow Y_G$.
	By Proposition~\ref{prop:stonestoch_infinite}, $f$ is uniquely determined by $(f_G)_{G\in \pfin[\J_Y]}$, with $f_G \coloneqq \pi_G f$.
	Let us now focus on $f_G$. Since it is locally constant, $f_G (U \given -)$ is constant on each member of a finite family $(A_i^U)_{i=1,\dots, n_U}$. Since $Y_G$ is finite, its subsets $U$ are finite, so we obtain a finite family of clopens $(A_i^U)_{i, \dots,n_U, U\subseteq Y_G}$.

	As any clopen in $X$ is of the form $A_F \times \prod_{j\in \J \setminus F} X_j$ for some $F$, there is a finite set $F$ common to all members of the finite family $(A_i^U)_{i,U}$, such that each $A_i^U$ can be written in such a cylindrical form.
	In particular, $f_G(U\given a) = f_{G} (U\given b)$ for any $b$ such that $\pi_F(a)=\pi_F(b)$.  In other words, $f_G$ factors through $\pi_F$ by defining, for any $\tilde{a}\in X_F$, $f_{F,G}(U\given \tilde{a})\coloneqq f_G(U\given a)$, where $a \in X$ is any element such that $\pi_F(a)=\tilde{a}$.
	It is now a direct check that the given definition of $f_{F,G}$ results in a finite approximation family, therefore ensuring fullness.
\end{proof}

\begin{corollary}\label{cor:binstoch_canstoch}
	$\inftenexp{\binstoch}$ is equivalent to $\canstoch$, the full subcategory of $\stonestoch$ in which objects are $2^{N}=\lbrace 0,1\rbrace^{N}$, where $N$ is any natural number or $\N$.
\end{corollary}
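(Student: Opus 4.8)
The plan is to obtain the equivalence by restricting, along $\binstoch\hookrightarrow\finstoch$, the fully faithful functor $\phi\colon \inften{\finstoch}\to\stonestoch$ furnished by \cref{thm:inften_of_finstoch}. First I would observe that $\binstoch$ is a full monoidal subcategory of $\finstoch$ closed under the monoidal product (finite powers of $2$ being closed under cartesian product), so by \cref{thm:adjunction_ITP} the inclusion lifts to an ITP-preserving symmetric monoidal functor $\iota\colon\inften{\binstoch}\to\inften{\finstoch}$, which on objects re-views an abstract infinite tensor product of finite powers of $2$ as one landing in $\finstoch$. This $\iota$ is faithful by \cref{prop:faithful}; it is moreover full, since a morphism $\iota(X)\to\iota(Y)$ in $\inften{\finstoch}$ is a compatible family of $\finstoch$-morphisms $f_{F,G}\colon X_F\to Y_G$ whose domain and codomain are themselves finite powers of $2$, hence are already $\binstoch$-morphisms. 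Thus $\phi\circ\iota\colon\inften{\binstoch}\to\stonestoch$ is fully faithful.

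Next I would compute its essential image. By \cref{thm:inften_of_finstoch} and the explicit description of objects of $\inften{\binstoch}$, this image consists exactly of the topological infinite tensor products $\prod_{j\in\J}X_j$ with each $X_j$ a finite power of $2$, say $X_j=2^{k_j}$. If all but finitely many $k_j$ vanish the product is again a finite power of $2$; otherwise it is a nonempty, second-countable Stone space without isolated points, hence homeomorphic to the Cantor space $2^{\N}$ by Brouwer's theorem. Conversely, each $2^{N}$ with $N$ a natural number is the image of the object $2^{N}$ of $\binstoch$, and $2^{\N}$ is the image of the abstract infinite tensor product indexed by $\N$ all of whose components equal $2$. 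Hence the essential image of $\phi\circ\iota$ is exactly the class of objects of $\canstoch$.

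Since $\canstoch$ is by definition a full subcategory of $\stonestoch$, a fully faithful functor whose essential image is the object-class of $\canstoch$ corestricts to an equivalence $\inften{\binstoch}\simeq\canstoch$, symmetric monoidal and ITP-preserving because $\phi\circ\iota$ is. I expect the only step carrying genuine content --- the rest being bookkeeping on top of \cref{thm:inften_of_finstoch} --- to be the identification of the essential image: one must check that a countable topological product of finite powers of $2$ is always either finite or a Cantor space, and that both cases (and no others) occur, which is a routine application of Tychonoff's and Brouwer's theorems.
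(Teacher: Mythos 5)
Your proposal is correct and follows the route the paper intends: \cref{cor:binstoch_canstoch} is stated as an immediate consequence of \cref{thm:inften_of_finstoch}, obtained by restricting the fully faithful functor $\phi$ along $\binstoch\hookrightarrow\finstoch$ and identifying the essential image via Brouwer's theorem, exactly as you do (the paper even records the Brouwer step in the remark following the theorem). You merely make explicit the bookkeeping --- fullness and faithfulness of the lifted inclusion $\iota$ and the case analysis on the products $\prod_j 2^{k_j}$ --- that the paper leaves implicit.
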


In particular, by combining Proposition~\ref{cor:semicartesian_theory}, Corollary~\ref{cor:binstoch_canstoch}, and Example~\ref{ex:causcirc}, we obtain a characterisation by generators and equations of $\canstoch$.

\begin{corollary}\label{cor:canstoch_axiomatisation}
$\canstoch$ is isomorphic to $\inffree{\Sigma,E}$, where $(\Sigma,E)$ is the symmetric monoidal theory of $\cat{CausCirc}$ (\emph{cf.} Example~\ref{ex:causcirc}).
\end{corollary}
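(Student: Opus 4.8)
The plan is to chain together the three results cited in the statement. First I would recall from \cref{ex:causcirc} that the SMT $(\Sigma,E)$ underlying $\cat{CausCirc}$ satisfies $\free{\Sigma,E}\cong \binstoch$; in particular $\free{\Sigma,E}$ is a semicartesian category with cancellative deletions, since $\binstoch$ is (as verified in \cref{sec:semicartesian}), so the two assumptions preceding \cref{def:inffree} are met and $\inffree{\Sigma,E}$ is defined. Next, \cref{cor:semicartesian_theory} gives an ITP-preserving symmetric monoidal equivalence $\inffree{\Sigma,E}\simeq \inften{(\free{\Sigma,E})}$, and transporting the isomorphism $\free{\Sigma,E}\cong\binstoch$ through the $\inften{-}$ construction yields $\inften{(\free{\Sigma,E})}\simeq \inften{\binstoch}$. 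Finally, \cref{cor:binstoch_canstoch} identifies $\inften{\binstoch}$ with $\canstoch$. Composing these gives $\inffree{\Sigma,E}\simeq \canstoch$.

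The one point that needs a little care is the upgrade from equivalence to \emph{isomorphism}, as stated. For this I would argue that both categories are strict and that the comparison functor is bijective on objects: $\inffree{\Sigma,E}$ has objects the natural numbers together with a single symbol $\infty$ (by \cref{def:inffree}), while $\canstoch$ has objects exactly the spaces $2^{N}$ for $N$ a natural number or $\N$ (by \cref{cor:binstoch_canstoch}), and the assignment $n\mapsto 2^{n}$, $\infty\mapsto 2^{\N}$ is a bijection. An equivalence that is bijective on objects between strict monoidal categories is an isomorphism, so it remains only to check that the composite functor sends $n$ to $2^{n}$ and $\infty$ to $2^{\N}$ — which follows because the functor $\free{\Sigma,E}\to\binstoch$ of \cref{ex:causcirc} sends the object $n$ to $2^{n}$, and the $\inffree{-}$/$\inften{-}$ constructions both send the free infinite object to the infinite tensor power, which on the $\canstoch$ side is $2^{\N}$ by \cref{prop:stonestoch_infinite}.

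The main obstacle I anticipate is precisely reconciling the different presentations of the monoidal structure on the two sides: $\inffree{\Sigma,E}$ relies on fixed bijections $\mathbb{N}\sqcup\mathbb{N}\cong\mathbb{N}$ to make $\infty\otimes\infty=\infty$ strictly, whereas $\canstoch\subseteq\stonestoch$ has a genuine product as its infinite tensor, so strict preservation of $\otimes$ has to be phrased up to the coherence isomorphisms supplied by the (strong) monoidal equivalence of \cref{cor:semicartesian_theory}. Since the corollary only claims an \emph{isomorphism of categories} (not of strict monoidal categories), this is not fatal: one states the isomorphism at the level of underlying categories and notes, as remarked after \cref{cor:semicartesian_theory}, that equational reasoning in $\canstoch$ may be carried out using the theory $(\Sigma,E)$ together with the plate equations of \cref{fig:plate_compositions} and \eqref{eq:disappearplate}. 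I would therefore keep the proof short, deferring the routine verification that the composite is bijective on objects and morphisms to the already-established \cref{cor:semicartesian_theory,cor:binstoch_canstoch}.
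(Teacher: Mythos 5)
Your proposal is correct and takes essentially the same route as the paper, which proves the corollary simply by chaining \cref{ex:causcirc}, \cref{cor:semicartesian_theory} and \cref{cor:binstoch_canstoch} with no further elaboration. Your additional care in upgrading the resulting equivalence to an isomorphism --- by exhibiting a fully faithful comparison functor that is bijective on objects ($n\mapsto 2^n$, $\infty\mapsto 2^{\N}$) --- is a worthwhile refinement, since the cited results only deliver equivalences while the statement claims an isomorphism.
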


Another important consequence of our results is that $\canstoch$ includes all probability measures on $\mathbb{R}$.
This follows from the fact that $\mathbb{R}$ is the infinite tensor power of $2$ in $\borelstoch$ \cite{fritzrischel2019zeroone}.
Indeed, since both $\mathbb{R}$ in $\borelstoch$ and $2^{\N}$ in $\canstoch$ are associated to finite approximation families of morphisms $I\to 2^n$ in $\finstoch$, we obtain $\borelstoch (I, \mathbb{R})\cong \canstoch (I,2^{\N})$. This observation suggests that $\canstoch$ is a rather expressive setting for studying continuous probability. We provide a simple use case below.

\section{A Use Case of Plate Notation: Markov Chains}\label{sec:plate_chains}

As an example of diagrammatic reasoning using plate notation, we briefly discuss Markov chains, which have been recently studied via Markov categories in \cite{fritz2025hidden}. We restrict here to the time-homogenous case, which defines a Markov chain with $n$ steps in $\binstoch$ as an endomorphism $f\colon X \to X$ inductively, as follows:
\[
\tikzfig{c1}\,\, \coloneqq \,\, \tikzfig{fX} \qquad\qquad \tikzfig{cn}\,\, \coloneqq \,\, \tikzfig{cn_def}
\]
For example, $c_3$ is given by the following string diagram:
\[
\tikzfig{c3}
\]
The idea is that each copy of $X$ represents a random variable of the chain, and the use of the same $f$ for all steps translates to the request that $P(X_n \mid X_{n-1})= P(X_{n+1}\mid X_{n})$, i.e.\ that the obtained Markov chain is time-homogeneous.
Additional details can be found in \cite{fritz2025hidden}.

It can be verified that the $c_n$s define a finite approximation family, hence a morphism $c\colon X \to X^{\mathbb{N}}$, the infinite tensor power of $X$. As one may expect, $c$ is invariant under the addition of a precedent step in the inductive construction.
Note this property cannot be formulated in the discrete setting of $\binstoch$ (and of $\cat{CausCirc}$); however, we can easily prove it by diagrammatic reasoning in $\inffree{\Sigma,E}$, as justified by Corollary~\ref{cor:canstoch_axiomatisation}:
\begin{equation*}
	\begin{split}
	\tikzfig{cn_plate} \quad &=\quad \tikzfig{cn_f_plate} \quad =\quad \tikzfig{cn_plate_f}	\\[1.5ex]
	&= \quad \tikzfig{final_cn_plate_copy_f}
	\end{split}
\end{equation*}
where in the last step we also used reindexing, as prescribed by Remark~\ref{rem:reindexing}.

\section{Conclusion and Future Work}\label{sec:conclusions}
This paper focuses on introducing categorical methods for studying continuous probabilistic processes as limits of discrete ones. We do so by introducing a universal construction that associates to any semicartesian category $\cC$ a semicartesian category $\inften{\cC}$ with all (countably) infinite tensor products. When $\cC$ has cancellative deletions, $\inften{\cC}$ can be replaced (up to equivalence) by $\inftenexp{\cC}$, where in the latter morphisms are given by families of finite approximations.

Moreover, we discuss how axiomatic presentations for $\cC$ can be lifted to $\inftenexp{\cC}$, and introduce plate notation to manipulate as string diagrams $\inftenexp{\cC}$-morphisms. These results (in particular, Proposition~\ref{cor:semicartesian_theory}, Corollary~\ref{cor:univproperty}, and Corollary~\ref{cor:canstoch_axiomatisation}) are the first to provide tools to axiomatise Markov categories with infinite tensor products.

As main case study, we focus on $\finstoch$, whose infinite tensor construction $\inften{\finstoch}$ we characterise in terms of Stone spaces and locally constant Markov kernels.
Furthermore, the existence of an axiomatic presentation of the subcategory $\binstoch$ (having cancellative deletions) allows us to derive one for $\inftenexp{\binstoch}$. We can effectively use diagrammatic reasoning in $\inftenexp{\binstoch}$ to study any probability measure on $\mathbb{R}$. We give a very simple use case regarding Markov chains; clearly, this only scratches the surface of what can be studied in $\inftenexp{\binstoch}$, which remains to be explored in follow-up work. Nonetheless, it offers a necessary first step toward connecting semantic approaches with string diagrammatic methods for continuous probability.

Our approach suggests ways towards axiomatising Markov kernels beyond local constancy. 
A potentially fruitful approach is to closely examine the universal property of $\borelstoch$ established in \cite{rischel2025universalmeasure} (keeping in mind that midpoint algebras are not going to be sufficient, cf.~Example~\ref{ex:lc_Mker}\eqref{it:midpoint}).
Alternatively, one may try to describe general Markov kernels in terms of locally constant ones, possibly restricting to standard Borel spaces. Yet another direction may be to investigate a universal construction that guarantees disintegration of measures, in the sense of having conditionals in Markov categories (\cite{fritz2019synthetic}). 


Another interesting question is studying examples different from $\finstoch$ and $\binstoch$. Natural candidates are Gaussian probability ($\gauss$, as studied in~\cite{stein2024graphical}), Gaussian mixtures (\cite{torresruiz2025-mixtures}), or even non probabilistic examples of Markov categories, such as the category of finite sets and multivalued functions (\cite[Example 2.6]{fritz2019synthetic}).

Finally, we are interested in integrations of our approach (and of plate notation) with categorical approaches to probability that go beyond Markov categories, such as partial Markov categories~\cite{dilavore2024partial}, tape diagrams~\cite{bonchi2025tapediagrams}, and graded diagrams~\cite{sarkis2025gradedimprecise}.


\bibliographystyle{./entics}
\bibliography{references}

\appendix


\section{Finite Approximation Families Form a Semicartesian Category}\label{sec:inften_cat}
This appendix shows that the alternative universal construction $\inftenexp{\cC}$ offered in \cref{sec:finite_approximations} indeed yields a semicartesian category.
We start with a precise definition of the composition, briefly sketched in Definition~\ref{def:inften}.
\begin{definition}
	Let $\cC$ be a semicartesian category with cancellative deletion.
	Let $X$, $Y$ and $Z$ be abstract infinite tensor products and let $f\colon X \to Y$ and $g\colon Y\to Z$ be finite approximation families.

	We define the \emph{composition of finite approximation families} $gf\colon X \to Y$ by setting
	$\Lambda_{gf}\subseteq \pfin[\J_X]\times \pfin[\J_Z]$ to be the set of pairs $(F,H)$ for which there exists $G \in \pfin[\J_Y]$ such that $(F,G)\in \Lambda_f$ and $(G,H) \in \Lambda_g$, and
	\[
	gf_{F,H} \coloneqq g_{G,H} f_{F,G}
	\]
	for any meaningful choice of $G$.
\end{definition}
The proof that this is indeed a good notion of composition is easily shown.
Given two choices $G$ and $G'$, we can consider the following commutative diagram given by naturality
\[
\begin{tikzcd}[column sep=large]
	X_{\tilde{F}} \ar[r,"f_{\tilde{F},G\cup G'}"]\ar[d] & Y_{G \cup G'}\ar[r,"g_{G\cup G',H}"]\ar[d] & Z_{H}\ar[d,equal]\\
	X_{F}\ar[r,"f_{F,G}"] & Y_{G}\ar[r,"g_{G,H}"] & Z_{H}
\end{tikzcd}
\]
where $\tilde{F}$ is obtained by the covering condition. Cancellativity of deletions then implies surjectivity of $X_{\tilde{F}}\to X_{F}$, which in turn shows that the lower composition is completely determined by the upper composition, therefore concluding that $g_{G,H} f_{F,G} = g_{G',H} f_{F,G'}$.

\begin{lemma}\label{lem:composition_equivalence}
	Let $\cC$ be a semicartesian category with cancellative deletion.
	Then the equivalence of finite approximation families respects the composition.
\end{lemma}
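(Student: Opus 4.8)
The plan is to prove that if $f\sim f'\colon X\to Y$ and $g\sim g'\colon Y\to Z$, then $gf\sim g'f'$, i.e.\ that $(gf)_{F,H}=(g'f')_{F,H}$ for every $(F,H)\in\Lambda_{gf}\cap\Lambda_{g'f'}$. So I fix such a pair $(F,H)$. By the definition of composition there are $G_1,G_2\in\pfin[\J_Y]$ with $(F,G_1)\in\Lambda_f$, $(G_1,H)\in\Lambda_g$, $(F,G_2)\in\Lambda_{f'}$, $(G_2,H)\in\Lambda_{g'}$. These witnesses are a priori unrelated, and the idea is to enlarge to $G\coloneqq G_1\cup G_2$ on the $Y$-side and then suitably enlarge the $X$-side so that everything lands in the intersections over which $f\sim f'$ and $g\sim g'$ are known to agree, after which one descends back to $(F,H)$ using that $\pi_{F',F}$ is epic.

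The steps, in order, are the following. \textbf{(1)} By hereditariness (enlarging the first coordinate), $(G_1,H)\in\Lambda_g$ and $G\supseteq G_1$ give $(G,H)\in\Lambda_g$, and likewise $(G,H)\in\Lambda_{g'}$; hence $(G,H)\in\Lambda_g\cap\Lambda_{g'}$ and so $g_{G,H}=g'_{G,H}$. \textbf{(2)} Using the covering condition for $\Lambda_f\cap\Lambda_{f'}$ (observed to hold after \cref{def:compatiblefamily}) applied to $G$, together with hereditariness, there is $F'\supseteq F$ with $(F',G)\in\Lambda_f\cap\Lambda_{f'}$; hence $f_{F',G}=f'_{F',G}$. \textbf{(3)} Since $(F',G)\in\Lambda_f$ and $(G,H)\in\Lambda_g$, the pair $(F',H)$ lies in $\Lambda_{gf}$ with $(gf)_{F',H}=g_{G,H}f_{F',G}$; the same computation with primes, combined with (1)–(2), gives $(gf)_{F',H}=(g'f')_{F',H}$. \textbf{(4)} To descend from $F'$ to $F$: naturality of $f$ for $(F,G_1)\subseteq(F',G)$ gives $\pi_{G,G_1}f_{F',G}=f_{F,G_1}\pi_{F',F}$, and naturality of $g$ for $(G_1,H)\subseteq(G,H)$ gives $g_{G,H}=g_{G_1,H}\pi_{G,G_1}$, whence
\[
(gf)_{F',H}=g_{G,H}f_{F',G}=g_{G_1,H}\pi_{G,G_1}f_{F',G}=g_{G_1,H}f_{F,G_1}\pi_{F',F}=(gf)_{F,H}\,\pi_{F',F},
\]
the last equality using $G_1$ as a valid middle index for $(F,H)$; the same identity holds with $g'f'$ in place of $gf$. \textbf{(5)} Combining (3) and (4), $(gf)_{F,H}\,\pi_{F',F}=(g'f')_{F,H}\,\pi_{F',F}$; since $\pi_{F',F}=\id_{X_F}\otimes\del_{X_{F'\setminus F}}$ up to permutation and $\cC$ has cancellative deletions, $\pi_{F',F}$ is an epimorphism, so $(gf)_{F,H}=(g'f')_{F,H}$. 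As $(F,H)$ was arbitrary, $gf\sim g'f'$.

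The main obstacle is conceptual rather than computational: the middle indices supplied by the definition of composition live in $\Lambda_f$ and $\Lambda_g$ separately, not in the intersections $\Lambda_f\cap\Lambda_{f'}$, $\Lambda_g\cap\Lambda_{g'}$ that the hypotheses refer to, and hereditariness only lets one enlarge the \emph{domain} index, never the \emph{codomain} index. The resolution is the asymmetric two-stage enlargement above --- first enlarge the $Y$-index, which is free on the $g$-side since $H$ already lies in both $\Lambda_g$ and $\Lambda_{g'}$, and only then enlarge the $X$-index via covering --- followed by the descent along $\pi_{F',F}$, which is the single place where cancellative deletions of $\cC$ is genuinely needed. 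Everything else (checking the claimed pairs lie in the relevant index sets, and the two naturality squares) is a routine diagram chase.
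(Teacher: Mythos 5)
Your proof is correct and follows essentially the same route as the paper's: enlarge the middle index on the $Y$-side, then enlarge the domain index via the covering condition for the intersection, establish agreement of the two composites there, and descend to $(F,H)$ using that $\pi_{F',F}$ is an epimorphism by cancellative deletions. The only difference is that you spell out the naturality chase justifying the identity $(gf)_{F',H}=(gf)_{F,H}\,\pi_{F',F}$, which the paper leaves implicit in its final appeal to cancellativity.
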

\begin{proof}
	This follows from a reasoning similar to the one above: let $f,f'\colon X \to Y$ and $g,g'\colon Y \to Z$ be finite approximation families such that $f$ is equivalent to $f'$ and $g$ is equivalent to $g'$.
	Then, for each $H \in \pfin[\J_Z]$, there exists $G$ and $G'$ such that $g_{G,H}$ and $g'_{G',H}$ are defined.
	By equivalence of $g$ and $g'$ and hereditariness, we have $g_{G\cup G' , H} = g'_{G\cup G', H}$.
	Now, a similar reasoning for $f$ and $f'$ starting from $G \cup G'$ gives rise to a set $\tilde{F}$ such that $f_{\tilde{F},G\cup G'} = f'_{\tilde{F},G\cup G'}$.
	Combining the two together, we proved that for every $H$ there exists $\tilde{F}$ such that $gf_{\tilde{F},H} = g'f'_{\tilde{F},H}$.
	In particular, for each $(F,H)\in \Lambda_{gf}\cap \Lambda_{g'f'}$, we have $\tilde{F}\supseteq F$ such that $gf_{\tilde{F},H} = g'f'_{\tilde{F},H}$, and by cancellativity of deletion we conclude that $gf_{{F},H} = g'f'_{{F},H}$ as well.
\end{proof}

\begin{proposition}
	Let $\cC$ be a semicartesian category with cancellative deletions. Then $\inftenexp{\cC}$ is a semicartesian category.
\end{proposition}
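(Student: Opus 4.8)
The plan is to verify the axioms of a semicartesian category for $\inften{\cC}$ in the order: (i) well-definedness of composition on equivalence classes, (ii) associativity and unitality of composition, (iii) the symmetric monoidal structure, and (iv) terminality of the unit. Much of the groundwork has already been laid: the composition $gf$ of compatible families is well-defined \emph{as a compatible family} (the displayed naturality square plus cancellativity of deletions shows the value $g_{G,H}f_{F,G}$ is independent of the intermediate $G$), and \cref{lem:composition_equivalence} shows this descends to equivalence classes. So for (i) it remains only to check that $gf$, as defined, genuinely satisfies the three clauses of \cref{def:compatiblefamily} — naturality follows by pasting the naturality squares of $f$ and $g$; the covering condition for $\Lambda_{gf}$ follows by applying the covering condition of $g$ to get $G$ from $H$, then that of $f$ to get $F$ from $G$; and hereditariness follows from hereditariness of $f$ in the first coordinate (enlarging $F$ keeps $(F,G)\in\Lambda_f$, hence $(F,H)\in\Lambda_{gf}$).

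For (ii), associativity $(hg)f = h(gf)$ is checked pointwise: given $(F,K)$, pick intermediate $G,H$ via the covering conditions so that $f_{F,G}$, $g_{G,H}$, $h_{H,K}$ are all defined; both triple composites then equal $h_{H,K}g_{G,H}f_{F,G}$, using once more that the choice of intermediates is immaterial up to the cancellativity-of-deletions argument. For the identity on $X$ one takes $\id_X$ to be the compatible family with $\Lambda = \{(F,F) : F\in\pfin[\J_X]\}$ and $(\id_X)_{F,F} = \id_{X(F)}$ (which satisfies naturality since $\pi_{F',F}$ is natural, covering and hereditariness being clear after noting that enlarging the first coordinate forces using $\pi_{F',F}$ — more precisely one should take $\Lambda = \{(F',F) : F\subseteq F'\}$ with $(\id_X)_{F',F} = \pi_{F',F}$ to get hereditariness, and check this is equivalent); the left and right unit laws then reduce to $\pi_{G,G'}\circ f_{F,G} = f_{F,G'}$ and $f_{F',G}\circ\pi_{F',F} = f_{F,G}$, which are exactly instances of the naturality clause.

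For (iii), I would check that the monoidal product on objects, $X\otimes Y \colon F \mapsto X(F\cap\J_X)\otimes Y(F\cap\J_Y)$, is again an abstract infinite tensor product (its associated family of objects is $(X_j)_{j\in\J_X}$ together with $(Y_j)_{j\in\J_Y}$, with the convention on the shared basepoint $*$ handled so that $\J_X\cup\J_Y$ is the index set), and that the assignment on morphisms $(f\otimes g)_{F,G} = f_{F\cap\J_X, G\cap\J_Z}\otimes g_{F\cap\J_Y, G\cap\J_W}$ yields a compatible family — again naturality/covering/hereditariness inherited componentwise, using that $\otimes$ in $\cC$ is a bifunctor and $\pi_{F',F}^{X\otimes Y}$ decomposes (up to permutation) as $\pi^X\otimes\pi^Y$. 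Functoriality of $\otimes$ and the coherence isomorphisms (associator, unitors, braiding) are then induced levelwise from those of $\cC$, and the coherence axioms hold because they hold in $\cC$ at each stage $F$; here one must be slightly careful since $\inften{\cC}$ is only claimed to be a (non-strict) monoidal category when $\cC$ is, as the index sets $\J_X\cup\J_Y$ do not literally satisfy strict associativity/unit equations on the nose (the remark after \cref{def:inften} about the shared $*$ is what makes the unit strict). Finally (iv): the unit $I_{\inften{\cC}} = (\{*\}\mapsto I)$ is terminal because for any $X$ there is a unique compatible family $X\to I_{\inften{\cC}}$, namely $(\del_{X(F)})_{F}$, uniqueness following from terminality of $I$ in $\cC$ together with the equivalence relation on compatible families.

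The main obstacle I expect is not any single axiom but the bookkeeping around \emph{choices of intermediate index sets} and the interaction of the equivalence relation with all structure maps: every equation between morphisms of $\inften{\cC}$ must be proved ``up to equivalence,'' so at each step one has to argue that two compatible families agreeing on the intersection of their index sets — after enlarging via covering and then shrinking via cancellativity of deletions — are genuinely equal. This pattern (enlarge the first coordinate to a common $\tilde F$, use naturality, then cancel deletions to descend) recurs throughout, and making it fully rigorous, rather than the algebraic identities themselves, is where the care is needed. The secondary subtlety is the monoidal coherence: one should state precisely that $\inften{\cC}$ is \emph{strict} when $\cC$ is (as already noted in the text, since the product is pointwise and the unit is pinned by $*$), and otherwise only strong monoidal, inheriting coherence data from $\cC$.
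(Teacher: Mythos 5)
Your proposal is correct and follows essentially the same route as the paper, which simply observes that composition is well-defined by the preceding lemma on equivalence classes and that the monoidal structure and terminality of the unit are inherited pointwise from $\cC$; your write-up is a (legitimately) more detailed unpacking of that ``direct check,'' including the right fix for the identity family via $(\id_X)_{F',F}=\pi_{F',F}$ to secure hereditariness. The only minor imprecision is your worry about associativity of the index sets: unions of sets are strictly associative, so (as the paper notes) $\inften{\cC}$ is strict whenever $\cC$ is.
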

\begin{proof}
	By Lemma~\ref{lem:composition_equivalence}, $\inftenexp{\cC}$ is a category.
	The remaining properties follow by direct check: its symmetric monoidal structure is inherited from that of $\cC$, with associators, unitors and permutations described pointwise, while semicartesianity follows from terminality of $I$ in $\cC$, by definition of finite approximation families.
\end{proof}

\section{Locally Constant Markov Kernels Form a Markov Category}\label{sec:lcMarkov}
This appendix complements \cref{sec:stone} and establishes that $\stonestoch$, the category of Stone spaces and locally constant Markov kernels, is a Markov category.
We first ensure that the composition is well-behaved.

\begin{lemma}\label{prop:composition}
	Let $f\colon X \oarrow Y$ and $g\colon Y\oarrow Z$ be locally constant Markov kernels. Then their composition
	\[
	gf(V\given x) \coloneqq \int_Y g(V\given y) f(dy\given x)
	\]
	is locally constant.
\end{lemma}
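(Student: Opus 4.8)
The plan is to exploit compactness of $Y$ in order to collapse the defining integral to a finite sum. Fix a clopen set $V\in\clopen{Z}$; I must show that $gf(V\given -)\colon X\to[0,1]$ is locally constant. First I would check that the integral is legitimate: by the remark following Carath\'{e}odory's extension theorem, $f(-\given x)$ extends to a genuine $\sigma$-additive probability measure on the Baire $\sigma$-algebra of $Y$, and the integrand $g(V\given -)$ is Baire-measurable, being locally constant. In fact I would argue it is a simple function.

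The crux of the argument, and the only place where compactness of Stone spaces is used, is the observation that $g(V\given -)$ is a \emph{simple} function with clopen level sets. Indeed, since $g$ is a locally constant Markov kernel, every point of $Y$ has an open neighbourhood on which $g(V\given -)$ is constant, so each level set $g(V\given -)^{-1}(c)$ is open; it is also closed, its complement being a union of other (open) level sets. These level sets are pairwise disjoint, open, and cover $Y$, so by compactness only finitely many are nonempty; write them $B_1,\dots,B_n\in\clopen{Y}$, with corresponding values $c_1,\dots,c_n\in[0,1]$. Then $g(V\given -)=\sum_{i=1}^n c_i\,\mathbf{1}_{B_i}$, and for every $x\in X$,
\[
gf(V\given x)=\int_Y g(V\given y)\,f(dy\given x)=\sum_{i=1}^n c_i\, f(B_i\given x),
\]
using that each clopen $B_i$ lies in the Baire $\sigma$-algebra, where the extended measure agrees with the finitely-additive value $f(B_i\given x)$.

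Finally, since $f$ is a locally constant Markov kernel and each $B_i$ is clopen, every $f(B_i\given -)\colon X\to[0,1]$ is locally constant; and a finite linear combination $\sum_i c_i\, f(B_i\given -)$ with constant coefficients is again locally constant, since at any $x\in X$ one may intersect the finitely many neighbourhoods witnessing constancy of the summands. Hence $gf(V\given -)$ is locally constant, and as $V$ was arbitrary, $gf$ is locally constant. I do not expect any genuine obstacle: the compactness step above is the only point that requires care, and the rest is bookkeeping; the normalisation $gf(Z\given x)=1$ and finite additivity of $gf(-\given x)$ follow routinely from the corresponding properties of $g$ and $f$.
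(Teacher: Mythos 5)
Your proof is correct and follows essentially the same route as the paper's: use compactness of the Stone space $Y$ to write $g(V\given -)$ as a simple function over a finite disjoint clopen partition, collapse the integral to the finite sum $\sum_i c_i f(B_i\given x)$, and then intersect the finitely many neighbourhoods on which the $f(B_i\given -)$ are constant. Your level-set argument just makes explicit the finite clopen partition that the paper asserts directly.
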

The proof is instructive as it sheds light on the ``finiteness'' of the notion.
\begin{proof}
	Since $g(V\given -)$ is locally constant on a Stone space, we can construct a finite clopen cover $\lbrace U_i\rbrace_{i=1}^n$ of $Y$ where all clopens are disjoint and $g(V\given -)_{| U_i}$ is constant.
	Then,
	\[
	\int_Y g(V\given y) f(dy\given x) = \sum_{i=1}^n g(V\given U_i) f(U_i \given x).
	\]
	We now define $A_i$ the clopen set such that $x \in A_i$ and $f(U_i\given -)$ is constant on $A_i$. Then, on $\bigcap_{i=1}^n A_i$, the whole summation is constant, therefore concluding the proof.
\end{proof}

To show that $\stonestoch$ is a Markov category, we need to ensure the existence of copy maps. In $\stoch$, these are determined by the diagonal maps $X \to X \times X$. With this in mind, we show the following.

\begin{lemma}\label{lem:continous_function}
	Any continuous function $X \to Y$ gives rise to a locally constant Markov kernel $X\oarrow Y$.
\end{lemma}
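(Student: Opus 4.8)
The plan is to take the kernel induced by a continuous map $h \colon X \to Y$ between Stone spaces to be the pushforward of Dirac measures. Concretely, for $U \in \clopen{Y}$ and $x \in X$ I would set $f(U \given x) \coloneqq 1$ if $h(x) \in U$ and $f(U \given x) \coloneqq 0$ otherwise; equivalently, $f(U \given -)$ is the indicator function $\mathbf{1}_{h^{-1}(U)}$ of the preimage, and $f(- \given x)$ is the Dirac measure $\delta_{h(x)}$ restricted to $\clopen{Y}$. It then remains only to check the two defining conditions of a locally constant Markov kernel.

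First I would verify local constancy of $f(U \given -)$. The crucial point is the elementary topological fact that the preimage $h^{-1}(U)$ of a clopen set under a continuous map is again clopen; hence $h^{-1}(U)$ and its complement $X \setminus h^{-1}(U)$ are both open, they cover $X$, and $f(U \given -) = \mathbf{1}_{h^{-1}(U)}$ is constant (equal to $1$, resp.\ $0$) on each. Thus for any $x$ one may take the witnessing open neighbourhood $A$ to be whichever of these two open sets contains $x$.

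Next I would check that $f(- \given x)$ is a finitely-additive probability measure on $\clopen{Y}$: we have $f(Y \given x) = 1$ since $h(x) \in Y$, and if $U_1, \dots, U_n$ are pairwise disjoint clopens then $h(x)$ lies in at most one of them, so $f(\bigcup_{i=1}^n U_i \given x) = \sum_{i=1}^n f(U_i \given x)$. This completes the verification that $f$ is a bona fide locally constant Markov kernel.

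I do not expect a genuine obstacle here; the only ingredient beyond routine bookkeeping is the clopen-preimage fact above, which is exactly what makes Dirac pushforwards \emph{locally} constant rather than merely measurable. It would be natural to record, as an immediate corollary, that the assignment $h \mapsto f$ preserves identities and --- using \cref{prop:composition} for composition of kernels --- composition, so that it defines an identity-on-objects functor from the category of Stone spaces and continuous maps into $\stonestoch$; applied to the diagonal $X \to X \times X$ this supplies the copy maps $\cop_X$ needed to equip $\stonestoch$ with its Markov category structure.
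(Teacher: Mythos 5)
Your proposal is correct and follows exactly the paper's argument: define the kernel as the Dirac pushforward $\delta_{h(x)}$, use that $h^{-1}(U)$ and its complement are (cl)open to get local constancy, and note the finitely-additive probability measure condition is immediate. The added remark on functoriality and the diagonal supplying $\cop_X$ matches how the lemma is used in the paper.
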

\begin{proof}
	That a continuous function yields a (continuous) Markov kernel is well-known, and follows by setting $\tilde{f}(U\given x) \coloneqq \delta_{f(x)}(U)$ for any continuous function $f\colon X \to Y$.
	Then for any clopen $U$, $f^{-1}(U)$ and its complement are sufficient to ensure that $\tilde{f}(U \given -)$ is locally constant.
	The fact that $\tilde{f}(- \given x)$ is a finitely-additive probability measure is immediate.
\end{proof}

\begin{proposition}
	The category $\stonestoch$, whose objects are Stone spaces and whose morphisms are locally constant Markov kernels, is a Markov subcategory of $\stoch$.
\end{proposition}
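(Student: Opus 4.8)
The plan is to realise $\stonestoch$ as a (non-full) subcategory of $\stoch$ that is closed under every piece of Markov structure, and to transfer all the laws along the resulting faithful functor. First I would set up the candidate inclusion functor $\iota\colon\stonestoch\to\stoch$. On objects it sends a Stone space $X$ to the measurable space $(X,\mathrm{Baire}(X))$, where $\mathrm{Baire}(X)$ is the $\sigma$-algebra generated by $\clopen{X}$; on morphisms it sends a locally constant Markov kernel to its extension to $\mathrm{Baire}(X)$ given by Carath\'eodory's extension theorem (as noted in \cref{sec:stone}). Since a finitely-additive probability measure on the algebra $\clopen{Y}$ has a \emph{unique} $\sigma$-additive extension to $\mathrm{Baire}(Y)$, and the composition formula defining $gf$ in $\stonestoch$ (\cref{prop:composition}) is literally integration against $f$ restricted to clopens, $\iota$ preserves composition: $\iota(gf)$ and $\iota(g)\comp\iota(f)$ are Baire kernels that agree on the generating algebra $\clopen{Y}$, hence coincide. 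Likewise the identity on $X$ is the kernel $\delta_{(-)}$ induced by $\id_X$, which is locally constant by \cref{lem:continous_function}, so $\iota$ preserves identities. That $\stonestoch$ is genuinely closed under composition is exactly \cref{prop:composition}, and associativity and unit laws are then inherited from $\stoch$. Finally $\iota$ is faithful, since two locally constant kernels with the same Baire extension in particular agree on clopens.

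Next I would handle the monoidal structure, taking $\tensor$ on Stone spaces to be the topological product. This stays within Stone spaces: a product of compact Hausdorff spaces is compact Hausdorff by Tychonoff, and the rectangles $A\times B$ with $A,B$ clopen form a clopen base; moreover $\mathrm{Baire}(Y\times W)=\mathrm{Baire}(Y)\tensor\mathrm{Baire}(W)$, so $\iota$ is strict monoidal. The remaining point is that the $\stoch$-product $f\tensor h$ of two locally constant kernels is again locally constant. Every clopen of $Y\times W$ is a finite disjoint union of basic clopen rectangles $\bigsqcup_i A_i\times B_i$ (the clopen rectangles form a semiring), and on these $(f\tensor h)(A_i\times B_i\given (y,z))=f(A_i\given y)\,h(B_i\given z)$; since $f(A_i\given -)$ and $h(B_i\given -)$ are locally constant and finite sums of locally constant functions are locally constant, $(f\tensor h)(U\given -)$ is locally constant for every clopen $U$. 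Hence $\stonestoch$ is closed under $\tensor$, and the associators, unitors and symmetries of $\stoch$ — all induced by the evident homeomorphisms, hence locally constant by \cref{lem:continous_function} — restrict to $\stonestoch$, making $\iota$ a faithful strict symmetric monoidal functor.

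For semicartesianity, the one-point space $I$ is a Stone space and $\clopen{I}=\lbrace\emptyset,I\rbrace$, so the only finitely-additive probability measure on $\clopen{I}$ sends $I\mapsto 1$ and $\emptyset\mapsto 0$; thus there is a unique morphism $X\oarrow I$ in $\stonestoch$, namely $\del_X$ (the kernel of the continuous map $X\to I$), so $I$ is terminal. For the comonoid structure, the diagonal $\Delta_X\colon X\to X\times X$ is continuous, hence by \cref{lem:continous_function} induces a locally constant kernel $\cop_X\colon X\oarrow X\tensor X$, concretely $\cop_X(U\times V\given x)=\delta_x(U)\delta_x(V)$. Since $\iota$ is faithful and strict symmetric monoidal and sends $\cop_X$, $\del_X$ to the copy and delete maps of $\stoch$, every Markov-category equation (cocommutativity, coassociativity, counitality, and compatibility of $\cop$ with $\tensor$) holds in $\stonestoch$ because it holds in $\stoch$. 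Therefore $\stonestoch$ is a Markov category and $\iota$ a faithful Markov functor, i.e.\ $\stonestoch$ is a Markov subcategory of $\stoch$.

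The step I expect to be the main obstacle is the product-space bookkeeping: checking that the $\stoch$-product kernel restricted to $\clopen{Y\times W}$ is well-defined and finitely additive independently of the chosen disjoint rectangular decomposition, and that the Baire $\sigma$-algebra of a product of Stone spaces is the product of the Baire $\sigma$-algebras, so that the product-measure construction in $\stoch$ actually lands in $\stonestoch$ and $\iota$ is honestly strict monoidal. Everything else is a routine transfer of laws along the faithful functor $\iota$, using \cref{prop:composition} and \cref{lem:continous_function}.
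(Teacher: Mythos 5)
Your proposal is correct and follows essentially the same route as the paper: composition is handled by the lemma on composites of locally constant kernels, the tensor is the topological product (Stone by Tychonoff), and copy/delete come from continuous maps via the delta-kernel lemma, with all Markov equations inherited along the faithful inclusion into $\stoch$. The only difference is one of detail: where the paper asserts that the tensor of two locally constant kernels is ``clearly'' locally constant, you justify it via disjoint decompositions of clopens of $Y\times W$ into clopen rectangles, which is a welcome (and correct) elaboration rather than a different approach.
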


\begin{proof}
	By Lemma~\ref{prop:composition}, $\stonestoch$ is indeed a category.
	The tensor product of two locally constant Markov kernels is clearly a locally constant Markov kernel. Moreover, since all continuous maps give rise to locally constant Markov kernels (Lemma~\ref{lem:continous_function}), copy and delete maps also belong to $\stonestoch$, thus concluding the proof.
\end{proof}

\end{document}